\documentclass[a4paper,11pt]{article}
\setlength{\textheight}{23.30cm}
\setlength{\textwidth}{16.5cm}
\setlength{\oddsidemargin}{0.2cm}
\setlength{\evensidemargin}{0.2cm}
\setlength{\topmargin}{0cm}
\setlength{\parindent}{0.4cm}

\usepackage{graphics,color}
\usepackage{amsmath}
\usepackage{amssymb}
\usepackage{mathrsfs}
\usepackage{cite}
\usepackage{verbatim}
\usepackage{float}
\usepackage{graphicx}
\usepackage{amsthm}
\usepackage{textcomp}
\usepackage{subfig}
\usepackage{esint}
\usepackage{enumerate}
\usepackage{hyperref}

\numberwithin{equation}{section}
\mathchardef\emptyset="001F

\newtheorem{Theorem}{Theorem}[section]
\newtheorem{Definition}[Theorem]{Definition}
\newtheorem{Proposition}[Theorem]{Proposition}
\newtheorem{Corollary}[Theorem]{Corollary}
\newtheorem{Lemma}[Theorem]{Lemma}

\newcommand{\nada}[1]{}


\newcommand{\mres}{\mathbin{\vrule height 1.6ex depth 0pt width
0.13ex\vrule height 0.13ex depth 0pt width 1.3ex}}
\newcommand{\N}{\numberset{N}} 

\newcommand{\numberset}{\mathbb}
\newcommand{\Om}{\Omega} 
\newcommand{\R}{\numberset{R}}

\theoremstyle{definition}

\newtheorem{Example}[Theorem]{Example}

 \title{Existence, uniqueness and characterisation of vector-valued absolute minimisers for a second order $L^\infty$-variational problem
}
\author{
Simone Carano,  Nikos Katzourakis, and Roger Moser }
\date{}

\begin{document}
\maketitle

\begin{abstract} 
We study a vectorial $L^\infty$-variational problem of second order, where the supremal functional depends on the vector function $u$ through a linear elliptic operator in divergence form. We prove existence and uniqueness of the minimiser $u_\infty$ under prescribed Dirichlet boundary conditions, together with a characterisation of $u_\infty$ as solution of a specific system of PDEs. Our result can be seen as a twofold extension of the one in \cite{KM}: we generalise it to the vectorial setting and, at the same time, we consider more general elliptic operators in place of the Laplacian.
\end{abstract}
{\bf 2020 MSC:} 49K20; 35A15; 35B38; 35J47; 49J27; 49J45.\\
{\bf Key words and phrases:} Vectorial Calculus of Variations in $L^\infty$; higher order problems; Euler-Lagrange equations; elliptic systems; Gamma Convergence.

\section{Introduction}\label{sec:introduction}

For $n,N\in\N$, let $\Om\subset\R^n$ be an open bounded set and $F:\Om\times\R^N\to\R$ be $\mathscr{L}(\Om)\otimes\mathscr{B}(\R^N)$-measurable, where $\mathscr{L}(\Om)$ is the Lebesgue $\sigma$-algebra of $\Om$ and $\mathscr{B}(\R^N)$ is the Borel $\sigma$-algebra of $\R^N$. Consider the supremal functional
\begin{align}\label{E infty}
E_{\infty}(u):=\|F(\cdot, \mathrm{L}u)\|_{L^\infty(\Om)},
\end{align}
where $u$ belongs to the Fréchet Sobolev space
$$
\mathcal{W}^{2,\infty}(\Om,\R^N):=\bigcap_{1<p<\infty}\Big\{u\in W^{2,p}(\Om,\R^N):\mathrm{L} u\in L^\infty(\Om,\R^N)\Big\},
$$
and $\mathrm L$ is a linear elliptic operator in divergence form, i.e.
$$\mathrm{L}u:=\mathrm{div}(A\mathrm{D}u),$$
for a given symmetric $4$-tensor field $A\in L^{\infty}\left(\Om,(\R^{N\times n})_s^{\otimes2}\right)$. Equivalently, in index notation we can write
$$
(\mathrm{L}u)_i=\mathrm{D}_\alpha(A^{\alpha\beta}_{ij}\mathrm{D}_\beta u^j), \quad\forall i=1,\ldots,N,
$$
where we implicitely sum over the repeated indices $\alpha,\beta\in\{1,\ldots,n\}$, $j\in\{1,\ldots,N\}$. Ellipticity is understood either in the Legendre sense, meaning that there exists $\lambda>0$ such that
$$
AX:X\geq\lambda|X|^2,\quad\forall X\in\R^{N\times n},
$$
or in the Legendre-Hadamard sense, meaning that there exists $\lambda>0$ such that
$$
A(\xi\otimes\eta):(\xi\otimes\eta)\geq\lambda|\xi|^2|\eta|^2,\quad\forall \xi\in\R^N,\quad\forall\eta\in\R^{n}.
$$
These require the function $X\mapsto AX:X$ to be either convex or rank-one convex, respectively.\\
In the above, the symbol \textquotedblleft:" denotes the scalar product between matrices, more explicitely for any $X,Y\in\R^{N\times n}$ we write $AX:Y:=A^{\alpha\beta}_{ij}X^i_\alpha Y^j_\beta$, and $|X|$ stands for the Frobenius norm of $X$. Notice that the Legendre condition is stronger than the Legendre-Hadamard one. Moreover, $A$ is symmetric if $A^{\alpha\beta}_{ij}=A^{\beta\alpha}_{ji}$, or equivalently if
$$
AX:Y=X:AY \quad\forall X,Y\in\R^{N\times n}.
$$
The goal herein is to prove existence and uniqueness of global minimisers of \eqref{E infty} given first order Dirichlet boundary conditions on $\partial \Om$. In addition, we will show that they satisfy a certain system of PDE's. This will generalise the main result in \cite{KM}, achieved by the second and third appearing author, where they considered the scalar case involving the Laplace operator, namely when $N=1$ and $A=I_{n\times n}$. \\
Before entering into the details of our main result, we briefly describe the mathematical context of our problem. We are dealing with a vectorial second order problem in the $L^\infty$-Calculus of Variations, a field initiated by Aronsson in \cite{A}. His pioneering work on the scalar first order case has been well developed by now, and most challenges have been thoroughly analysed and understood (see \cite{Kbook} for a survey reference). More recently, in \cite{K1}, the second author approached the vectorial case, which turned out to be more involved and intriguing than the scalar one. For this reason, a complete theory is still far from reach. The situation is very similar in the higher order case, recently started in \cite{KM,KP2}.  Without any pretension of being exhaustive, we refer also to \cite{K1,K3,K4,KM1} for a glimpse of the literature on vectorial first order problems and to \cite{DK,KP1,KM2,KM3} for higher order ones. Furthermore, some interesting work in this field, relevant to the content of this paper, appears in \cite{AP,BDP,BK,KZ,MWZ,PWZ,PZ,RZ,RZ1}.\\
Now we proceed to specify the setting of our problem.
Let us first state the assumptions on the function $F$. We assume that $F:\Om\times\R^N\to\R$ is a Carathéodory function such that, for almost every $x\in\Om$, $F(x,0)=0$, $F(x,\cdot)$ is of class $C^2$. Moreover, we suppose that there exists $c>0$ such that
 \begin{align}
&{}{\mbox{ for a.e. }x\in\Om,\quad\xi\mapsto F(x,\xi)-\frac{|\xi|^2}{c} \mbox{ is convex on } \R^N},\label{ass0}\\
&F(x,\xi)\geq\frac{|\xi|^2}{c},\quad\mbox{a.e. } x\in\Om,\quad \forall \xi\in\R^N,\label{ass1}\\
&|F_\xi(x,\xi)|\leq c|\xi|,\quad\mbox{a.e. } x\in\Om,\quad \forall \xi\in\R^N,\label{ass2}
\end{align}
 where we denote by $F_\xi$ the gradient of $F$ with respect to the second variable. {}{We note that the convexity of $F(x,\cdot)-\frac{|\cdot|^2}{c}$ is a condition commonly known as \textit{strong convexity} in the literature.} In \cite{KM}, the authors consider a slightly different $F$, however, the particular choice of $F$ is not essential for our result. Indeed, for a function $g:\R^+\to\R^+$ lower semicontinuous and strictly increasing, set $G=g\circ F$ and $E'_\infty(u):=\|G(\cdot,\mathrm{L} u)\|_{L^\infty(\Om)}$. Then, it is not difficult to see that $E_\infty$ and $E'_\infty$ share the same minimisers. So, even if our assumptions on $F$ may seem to be restrictive, our result actually apply to a wider class of functions. This class includes, for instance, functions that are level-convex with sublinear growth, but non-convex and non-smooth at $0$, such as $G(\xi)=|\xi|^{1/2}$, or jumping level-convex functions, like $G(\xi)=\varphi(|\xi|)$ where $\varphi(0)=0$ and $\varphi(t):=\frac{t+n}{2}$ for $n-1<t\leq n$, $n\in\N$.\\
\begin{Example}
 Examples of functions that satisfy our assumptions are given by
{}{
\begin{itemize}
\item $F(x,\xi)=\alpha(x)|\xi|^2+g(x,\xi)$, with $\alpha,\frac{1}{\alpha}\in L^\infty(\Om,\R^+)$ and $g(x,\cdot)$ any nonnegative convex function with at most quadratic growth, such that $g(x,0)=0$;
\item $F(x,\xi)=\alpha(x)\bar F(\xi)$, with $\alpha$ as above and $\bar F$ the square of the cone function associated to $\partial C$, for any uniformly convex set $C\subset\R^N$ of class $C^2$.
\end{itemize}
}
\end{Example}
\noindent
Concerning the symmetric $4$-tensor $A$, we will assume one of the following conditions:
\begin{itemize}
\item[(H1)] $A\in C^{1}(\overline\Om,(\R^{N\times n})_s^{\otimes2})$ and satisfies the Legendre ellipticity condition;
\item[(H2)] $A$ has constant coefficients and satisfies the Legendre-Hadamard ellipticity condition.
\end{itemize}
As it is well known (see e.g. \cite{Dac}), (H2) implies that $X\mapsto AX:X$ is quasi-convex in $\R^{N\times n}$.\\
We consider boundary data $u_0\in\mathcal{W}^{2,\infty}(\Om,\R^N)$ for the variational problem associated to \eqref{E infty}, and we write $$\mathcal{W}_{u_0}^{2,\infty}(\Om,\R^N)=u_0+\mathcal{W}_0^{2,\infty}(\Om,\R^N)$$ for the class of competitors, where $\mathcal{W}_0^{2,\infty}(\Om,\R^N):=\mathcal{W}^{2,\infty}(\Om,\R^N)\cap{W}_0^{2,2}(\Om,\R^N)$.\\
\indent
Finally, we require two more technical assumptions. We need $\mathrm{L}$ to satisfy the following version of the {\it unique continuation property}.
\begin{Definition}{\bf (Unique continuation property)}\label{def ucp}
We say that the operator $\mathrm{L}$ has the unique continuation property if the zero nodal set of non-trivial weak solutions to $\mathrm{L}u=0$ is a null set w.r.t. the Lebesgue measure on $\Om$. 
\end{Definition}
\noindent
This measure theoretical version of unique continuation property has been considered for instance in \cite{FG}.
The reader may be more familiar with related, although different, notions, namely the {\it strong} and {\it weak} unique continuation property, which are often used in the literature. We refer to Section \ref{ucp} for further details and examples of operators L fulfilling the condition in Definition \ref{def ucp}. Moreover, the precise notion of weak solution to elliptic systems in divergence form is recalled in Section \ref{subsec systems}.\\
\indent
In addition, we assume that $u_0$ is $C^2$ on $\partial\Om$ in the Whitney sense, meaning that it posseses uniform second order Taylor expansions (\cite[Sec. 3, p. 64]{W}). In particular, $u_0|{_{\partial\Om}}$ can be extended to a $C^2$-map on $\R^n$, by the Whitney Extension Theorem.

Now we can state our main results.
{}
{\begin{Theorem}[{\bf Existence}]\label{existence thm}
Let $\Om\subset\R^n$ be a bounded open set and let $F:\Om\times\R^N\to\R$ be a Carathéodory function satisfying \eqref{ass1} and \eqref{ass2}, with $F(x,\cdot)$ convex for a.e. $x\in\Om$. Assume that $A\in C^{1}(\overline\Om,(\R^{N\times n})_s^{\otimes2})$ satisfies the Legendre-Hadamard ellipticity condition. Let $E_\infty$ be the functional in \eqref{E infty} and $u_0\in\mathcal{W}^{2,\infty}(\Om,\R^N)$. Then the problem
\begin{align}\label{problem E infty}
e_\infty:=\min_{u\in\mathcal{W}_{u_0}^{2,\infty}(\Om,\R^N)}E_\infty(u)
\end{align}
admits a solution $u_\infty\in\mathcal{W}_{u_0}^{2,\infty}(\Om,\R^N)$.
\end{Theorem}}

\begin{Theorem}[{\bf Uniqueness and characterisation}]\label{main thm}
Let $\Om\subset\R^n$ be a bounded open set with $C^{2}$ boundary and let $F:\Om\times\R^N\to\R$ be a function of class $C^2$ with $F(x,0)=0$ and $F(x,\cdot)$ strongly convex for a.e. $x\in\Om$, satisfying also \eqref{ass1} and \eqref{ass2}. Assume that $A$ is a symmetric $4$-tensor field satisfying either (H1) or (H2). Assume also that the operator $\mathrm{L}$ has the unique continuation property. Let $u_0\in\mathcal{W}^{2,\infty}(\Om,\R^N)$ be such that ${u_0}|_{\partial\Om}\in C^2$ in the sense of Whitney
 and $E_\infty$ be the functional in \eqref{E infty}. Then {}{the minimiser $u_\infty\in\mathcal{W}_{u_0}^{2,\infty}(\Om,\R^N)$ provided in Theorem \ref{existence thm} is the unique solution to \eqref{problem E infty}.} \\
Moreover, a system of PDEs can be derived as a necessary and sufficient condition for the minimality of $u_\infty$. More precisely, if $e_\infty=0$, then $F(\cdot,\mathrm{L}u_\infty)=0$ almost everywhere in $\Om$. If $e_\infty>0$, then there exists a map $f_\infty\in L^1(\Om,\R^N)$ that further belongs to $W^{2,q}_{loc}(\Om,\R^N)$ for all $q<\infty$ and such that
\begin{equation}\label{PDE u infty}
\left\{
\begin{aligned}
&F(x,\mathrm{L} u_\infty(x))\frac{F_\xi(x,\mathrm{L} u_\infty(x))}{|F_\xi(x,\mathrm{L} u_\infty(x))|}=e_\infty\frac{f_\infty(x)}{|f_\infty(x)|},\quad \mbox{a.e. } x\in\Om,\\
&\mathrm{L} f_\infty(x)=0, \quad \mbox{a.e. } x\in\Om.
\end{aligned}
\right.
\end{equation}
In particular, $F(\cdot,\mathrm{L} u_\infty)=e_\infty$ almost everywhere on $\Om$.\\
\end{Theorem}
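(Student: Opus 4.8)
The plan is to approximate the $L^\infty$ problem by $L^p$ problems and pass to the limit, combining $\Gamma$-convergence with the analysis of the Euler--Lagrange systems. First I would fix $p \in (1,\infty)$ and consider the functional $E_p(u) := \|F(\cdot,\mathrm{L}u)\|_{L^p(\Om)}$ on the affine space $\mathcal{W}_{u_0}^{2,\infty}(\Om,\R^N)$ — or, more robustly, on a suitable reflexive Sobolev space where one can guarantee existence by the direct method. Because $F(x,\cdot)$ is strictly convex, $F(x,0)=0$, and satisfies the coercivity bound \eqref{ass1}, the map $u\mapsto F(\cdot,\mathrm{L}u)$ is convex in $\mathrm{L}u$; under (H1) it is convex in $\mathrm{D}^2 u$ modulo lower order terms, and under (H2) it is quasiconvex, so $E_p$ is weakly lower semicontinuous after composing with the convex increasing function $t\mapsto t^p$. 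Coercivity in $W^{2,p}$ (or the relevant scale of spaces) follows from \eqref{ass1} together with elliptic regularity for $\mathrm L$ — here (H1) or (H2) is essential, as $\|\mathrm{L}u\|_{L^p}$ controls $\|u\|_{W^{2,p}}$ modulo boundary data via Calderón--Zygmund/Agmon--Douglis--Nirenberg estimates. This yields a unique minimiser $u_p$ of $E_p$ in $\mathcal{W}_{u_0}^{2,p}$, uniqueness coming from strict convexity of $F(x,\cdot)$ and the fact that $\mathrm L u = \mathrm L v$ with $u - v \in W^{2,2}_0$ forces $u=v$ by ellipticity.

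\textbf{Euler--Lagrange system and uniform bounds.} The next step is to write the Euler--Lagrange equation for $u_p$. Setting $\rho_p := F(\cdot,\mathrm{L}u_p)$ and normalising, the first variation gives, for all $\phi \in \mathcal{W}_0^{2,\infty}(\Om,\R^N)$,
\begin{equation*}
\int_\Om \rho_p^{p-1}\, F_\xi(x,\mathrm{L}u_p)\cdot \mathrm{L}\phi \, dx = 0,
\end{equation*}
which says that the $\R^N$-valued function $g_p := \rho_p^{p-1} F_\xi(\cdot,\mathrm{L}u_p)$ is a weak solution of the (formally self-adjoint, since $A$ is symmetric) system $\mathrm{L} g_p = 0$. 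Using \eqref{ass2} one bounds $|g_p| \le c\, \rho_p^{p-1}|\mathrm{L}u_p| \le c'\, \rho_p^{p-1/2}$ pointwise, and by the minimality $E_p(u_p) \le E_p(u_0) \le |\Om|^{1/p} E_\infty(u_0)$ together with $E_\infty(u_0)\le$ the $L^\infty$ bound one gets $\|g_p\|_{L^{p/(p-1/2)}}$ and similar norms under control; normalising $f_p := g_p / \|g_p\|_{L^1}$ gives a sequence of $L^1$-normalised solutions of $\mathrm{L}f_p = 0$. Interior elliptic regularity (Definition-independent, just Calderón--Zygmund for $\mathrm L f_p = 0$ with (H1) or (H2)) gives $W^{2,q}_{loc}$ bounds on $f_p$ uniform on compact subsets, hence a subsequence $f_p \to f_\infty$ strongly in $W^{2,q}_{loc}$ and in $L^1$ (the latter requiring a bit of care — one needs to rule out mass escaping to the boundary, which is where the structure of the bound on $g_p$ and the boundary regularity of $u_0$ enter).

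\textbf{Passage to the limit and characterisation.} In parallel, the coercivity estimate makes $\{u_p\}$ bounded in every $W^{2,q}$, $q<\infty$, with $\|\mathrm{L}u_p\|_{L^q} \le \|\mathrm{L}u_p\|_{L^p}^{}\cdot|\Om|^{\cdots} \to$ controlled, and a standard argument shows $\limsup_p E_p(u_p)^{} \le e_\infty$ and then, by weak lower semicontinuity applied on each fixed $L^q$ and letting $q\to\infty$, that any weak limit $u_\infty$ of a subsequence lies in $\mathcal{W}_{u_0}^{2,\infty}$ and satisfies $E_\infty(u_\infty) \le e_\infty$, hence is \emph{a} minimiser; uniqueness of the $L^\infty$ minimiser then must be extracted — I would prove it by showing any two minimisers have the same $\mathrm{L}u$ a.e.\ (using that, if $e_\infty > 0$, the PDE \eqref{PDE u infty} pins down $\mathrm{L}u_\infty$ through the strictly monotone map $\xi \mapsto F(x,\xi)F_\xi(x,\xi)/|F_\xi(x,\xi)|$ once $f_\infty$ is known to be essentially unique, or more directly via a convexity/strict-convexity argument on the sublevel sets combined with the unique continuation property). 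To derive \eqref{PDE u infty} itself, pass to the limit in the normalised Euler--Lagrange identity: $\rho_p^{p-1}/\|g_p\|_{L^1}$ must concentrate, in the limit, on the set $\{\rho_\infty = e_\infty\}$ where $\rho_\infty := F(\cdot,\mathrm{L}u_\infty)$, and a careful measure-theoretic argument (à la \cite{KM}) shows the limiting relation $\rho_\infty F_\xi(\cdot,\mathrm{L}u_\infty)/|F_\xi(\cdot,\mathrm{L}u_\infty)| = e_\infty f_\infty/|f_\infty|$ a.e., with $\mathrm{L}f_\infty = 0$; finally the unique continuation property forces $|f_\infty| > 0$ a.e., which both makes the normalisation in \eqref{PDE u infty} meaningful everywhere and, via the equality of the two unit vectors, forces $\rho_\infty = e_\infty$ a.e., giving $F(\cdot,\mathrm{L}u_\infty) = e_\infty$ a.e.

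\textbf{Main obstacle.} The delicate point is the limit $p\to\infty$ in the Euler--Lagrange system: showing that the $L^1$-normalised solutions $f_p$ do not lose mass at $\partial\Om$ (so that $f_\infty \not\equiv 0$), and that the weights $\rho_p^{p-1}$ concentrate exactly on the maximum set of $\rho_\infty$ rather than on a larger set. This is where the unique continuation property is indispensable — it guarantees $\{f_\infty = 0\}$ is Lebesgue-null, which is precisely what is needed to transfer information from $f_\infty$ back to $u_\infty$ and to close the characterisation; the case $e_\infty = 0$ is by contrast immediate, since then $E_\infty(u_\infty) = 0$ directly gives $F(\cdot,\mathrm{L}u_\infty) = 0$ a.e.
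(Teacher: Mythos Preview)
Your overall architecture --- $L^p$ approximation, Euler--Lagrange for $u_p$, interior estimates on $f_p$, passage to the limit, unique continuation --- matches the paper. But there are two genuine gaps.

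\textbf{Uniqueness.} Your sketch only produces the PDE system \eqref{PDE u infty} for the particular minimiser $u_\infty$ that arises as a limit of the $u_p$. To conclude uniqueness you need $F(\cdot,\mathrm{L}u)=e_\infty$ a.e.\ for \emph{every} minimiser $u$, and nothing in your outline gives this: an arbitrary minimiser need not be a limit of any $u_p$, so no $f$ is attached to it, and your option ``$f_\infty$ is essentially unique'' has no mechanism behind it. The paper closes this gap with a penalised functional: given an arbitrary minimiser $u$, minimise $A_p(v):=E_p(v)+\tfrac12\fint_\Om|v-u|^2$. The penalty forces the minimisers $v_p$ to converge to the preselected $u$ (since $e_p\le A_p(v_p)\le E_p(u)\le e_\infty$ squeezes the penalty term to zero), and then the same Euler--Lagrange analysis applied to $v_p$ produces an $f$ with $\mathrm{L}f=0$, $f\not\equiv 0$, satisfying \eqref{PDE u infty} with $u$ in place of $u_\infty$. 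Once every minimiser satisfies $F(\cdot,\mathrm{L}u)=e_\infty$ a.e., the strict convexity argument you allude to works cleanly: for two minimisers $u_1,u_2$ the average has $F(\cdot,\mathrm{L}\tfrac{u_1+u_2}{2})\le e_\infty$ with equality forced, hence $\mathrm{L}u_1=\mathrm{L}u_2$, hence $u_1=u_2$.

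\textbf{Passing to the limit in the PDE.} Your ``measure-theoretic argument \`a la \cite{KM}'' is too vague in the vectorial setting. The paper instead proves (Lemma~\ref{invertibility}) that for a.e.\ $x$ the map $\Phi_x(\xi):=F(x,\xi)F_\xi(x,\xi)/|F_\xi(x,\xi)|$ is a \emph{global homeomorphism} of $\R^N$ --- this requires a nontrivial determinant computation plus a covering-space argument --- and then rewrites the Euler--Lagrange relation as $\mathrm{L}u_p=\Phi_x^{-1}(\,\cdots\,)$, where the argument converges locally uniformly on $\Om\setminus\{f_\infty=0\}$. This is what lets you identify the limit pointwise; without invertibility you cannot pass from weak $L^q$ convergence of $\mathrm{L}u_p$ and locally uniform convergence of $f_p$ to the algebraic identity in \eqref{PDE u infty}. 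Your treatment of ``no mass escaping to $\partial\Om$'' is also underspecified; the paper tests $\mathrm{L}(f_p-f_\infty)=0$ against $\psi=d^2 g$ with $d=\mathrm{dist}(\cdot,\partial\Om)$ and uses the Whitney extension $u_*$ of $u_0$ to obtain $\int_\Om f_p\cdot\mathrm{L}u_*\ge |\Om|\alpha>0$, which is where the $C^2$ hypotheses on $\partial\Om$ and $u_0|_{\partial\Om}$ actually enter.
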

We point out that the assumptions in Theorem \ref{existence thm} are milder than the ones in Theorem \ref{main thm}.
In the latter, the $C^2$ regularity hypotheses on $\partial\Om$ and $u_0$ are required for the methods to work. Even though we are not able to remove these assumptions at the present stage, we believe that they are of a technical nature.

A common phenomenon in $L^\infty$-variational problems is that global minimisers may not minimise the same functional on subdomains with respect to their own boundary conditions. This non-locality feature naturally suggests to consider the so-called \textit{absolute minimisers}, whose minimality notion incorporates the locality property by definition (see \cite{A}). Explicitly, set $E_\infty(u,U):=\|F(\cdot,\mathrm L u)\|_{L^\infty(U)}$, for $U$ open subset of $\Om$. We say that $u\in\mathcal{W}_{u_0}^{2,\infty}(\Om,\R^N)$ is an absolute minimiser of $E_\infty$ if for every open subset $\Om'\subseteq\Om$ there holds
$$
E_\infty(u_\infty,\Om')\leq E_\infty(u_\infty+\Phi,\Om')\quad\forall\Phi\in\mathcal{W}^{2,\infty}_0(\Om',\R^N).
$$
In our specific problem, however, this notion does not have a central role, since we prove that a global minimiser is unique. Typically, uniqueness properties of the minimiser, together with a PDE characterisation, ensure locality properties. More precisely, the following result is straightforward.
\begin{Corollary}
The global minimiser $u_\infty$ given by Theorem \ref{main thm} is also the unique absolute minimiser of $E_\infty$. In particular, the PDE system \eqref{PDE u infty} characterises absolute minimisers.
\end{Corollary}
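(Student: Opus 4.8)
The plan is to derive the Corollary as a soft consequence of Theorem \ref{main thm}, drawing only on two of its conclusions: the \emph{uniqueness} of the global minimiser $u_\infty$, and the \emph{a.e.\ constancy} of $F(\cdot,\mathrm{L}u_\infty)$ on $\Om$, namely $F(\cdot,\mathrm{L}u_\infty)=e_\infty$ a.e.\ (with the understanding that this constant is $0$ when $e_\infty=0$). No additional PDE analysis enters.

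I would first dispatch uniqueness. If $v\in\mathcal{W}^{2,\infty}_{u_0}(\Om,\R^N)$ is an absolute minimiser, then specialising the defining inequality to $\Om'=\Om$ shows that $v$ is a global minimiser of $E_\infty$ over $\mathcal{W}^{2,\infty}_{u_0}(\Om,\R^N)$; by the uniqueness statement in Theorem \ref{main thm}, $v=u_\infty$. Thus there is at most one absolute minimiser, and it remains to check that $u_\infty$ qualifies.

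The substantive step is showing that $u_\infty$ is an absolute minimiser. Fix an open set $\Om'\subseteq\Om$ and $\Phi\in\mathcal{W}^{2,\infty}_0(\Om',\R^N)$. I would first record the (standard) zero-extension fact that $\widetilde\Phi$, the extension of $\Phi$ by $0$ to $\Om$, lies in $\mathcal{W}^{2,\infty}_0(\Om,\R^N)$: by $C_c^\infty(\Om')$-approximation, $\widetilde\Phi\in W^{2,2}_0(\Om,\R^N)$ with $\mathrm{D}^k\widetilde\Phi=\mathbf{1}_{\Om'}\,\mathrm{D}^k\Phi$ a.e.\ for $k=1,2$; expanding $\mathrm{div}(A\mathrm{D}\widetilde\Phi)$ and invoking $A\in C^1(\overline\Om)$ under (H1), or $A$ constant under (H2), then yields $\mathrm{L}\widetilde\Phi=\mathbf{1}_{\Om'}\,\mathrm{L}\Phi$ a.e., so that $\|\mathrm{L}\widetilde\Phi\|_{L^\infty(\Om)}=\|\mathrm{L}\Phi\|_{L^\infty(\Om')}<\infty$. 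Hence $w:=u_\infty+\widetilde\Phi\in\mathcal{W}^{2,\infty}_{u_0}(\Om,\R^N)$. The case $e_\infty=0$ is immediate, since $F(\cdot,\mathrm{L}u_\infty)=0$ a.e.\ and $F\ge0$ give $E_\infty(u_\infty,\Om')=0\le E_\infty(u_\infty+\Phi,\Om')$; so assume $e_\infty>0$, whence $F(\cdot,\mathrm{L}u_\infty)=e_\infty$ a.e.\ on $\Om$. If $|\Om'|=0$ the claim is vacuous, and if $|\Om\setminus\Om'|=0$ it reduces up to a null set to the global minimality of $u_\infty$ from Theorem \ref{main thm}; so suppose $|\Om'|>0$ and $|\Om\setminus\Om'|>0$. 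Assume, for contradiction, that $E_\infty(u_\infty+\Phi,\Om')<E_\infty(u_\infty,\Om')=e_\infty$. Then $\|F(\cdot,\mathrm{L}w)\|_{L^\infty(\Om')}=E_\infty(u_\infty+\Phi,\Om')<e_\infty$, while $\mathrm{L}w=\mathrm{L}u_\infty$ on $\Om\setminus\Om'$ gives $\|F(\cdot,\mathrm{L}w)\|_{L^\infty(\Om\setminus\Om')}=e_\infty$; since $F\ge0$, it follows that $E_\infty(w,\Om)=e_\infty=E_\infty(u_\infty,\Om)$, so $w$ too is a global minimiser. By uniqueness $w=u_\infty$, i.e.\ $\widetilde\Phi\equiv0$, hence $\Phi\equiv0$, contradicting the strict inequality. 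Therefore $E_\infty(u_\infty,\Om')\le E_\infty(u_\infty+\Phi,\Om')$ for all admissible $\Om',\Phi$, and $u_\infty$ is an absolute minimiser.

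For the last assertion, since every absolute minimiser equals $u_\infty$ and, by Theorem \ref{main thm}, an element of $\mathcal{W}^{2,\infty}_{u_0}(\Om,\R^N)$ is the global minimiser $u_\infty$ exactly when it solves system \eqref{PDE u infty} for some admissible $f_\infty$, the system \eqref{PDE u infty} characterises the absolute minimisers of $E_\infty$. The only non-formal ingredient in the whole argument is the zero-extension step, where the structural hypotheses (H1)/(H2) on $A$ are used; I do not expect it to be a genuine obstacle, since everything else rests purely on uniqueness and the a.e.\ constancy of $F(\cdot,\mathrm{L}u_\infty)$.
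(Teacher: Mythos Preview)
Your proof is correct and follows the same contradiction-via-uniqueness route as the paper. The paper's version is terser and does not invoke the a.e.\ constancy $F(\cdot,\mathrm{L}u_\infty)=e_\infty$ at all (it just uses that $u_\infty+\Phi$ agrees with $u_\infty$ on $\Om\setminus\Om'$, hence $E_\infty(u_\infty+\Phi,\Om)\le E_\infty(u_\infty,\Om)$, and then uniqueness); your case splits and appeal to constancy are harmless but unnecessary, while your explicit zero-extension discussion fills in a detail the paper leaves implicit.
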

\begin{proof}
For the sake of contradiction, assume that there exist an open set $\Om'\subset\Om$ and $\Phi\in\mathcal{W}^{2,\infty}_0(\Om',\R^N)\setminus\{0\}$ such that 
\begin{align}\label{contrad arg}
E_\infty(u_\infty+\Phi,\Om')<E_\infty(u_\infty,\Om').
\end{align}
Therefore, since $u_\infty+\Phi=u_\infty$ in $\Om\setminus\Om'$, we infer that $u_\infty+\Phi$ is a global minimiser of $E_\infty$.
By the uniqueness property of $u_\infty$, we must have $\Phi\equiv0$, contradicting \eqref{contrad arg}. 
\end{proof}

Notice that in the previous proof we could have argued in a similar way by using, instead of the uniqueness, the fact that a global minimiser $u$ satisfies $F(\cdot,\mathrm Lu)=e_\infty$. In fact, the system of PDEs \eqref{PDE u infty} characterises minimisers of \eqref{E infty}. The presence of discontinuous data and unbalanced boundary conditions make the system difficult to solve. However, it gives precise information on the behaviour of the supremal functional along $u_\infty$, forcing it to be constant almost everywhere in $\Om$, namely, the supremum in \eqref{E infty} is attained at almost every point. This is in direct contrast to the scalar first order case \cite{A}.\\
\indent
 We prove existence of minimisers via $L^p$-approximation, which is a very well known technique and is present also in Aronsson's works. Our arguments utilise elliptic estimates derived from the divergence form of the operator $\mathrm{L}$. An interesting direction to explore would be to relax the pure variational nature of $\mathrm{L}$, introducing also lower order terms. The ellipticity plays a fundamental role also in the derivation of the PDE system satisfied by $u_\infty$, in particular to show existence and non-triviality of the map $f_\infty$, which appears with $u_\infty$ as an unknown of the system. Finally, we will see that the uniqueness property of $u_\infty$ will arise from the fact that it satisfies system \eqref{PDE u infty}. It is worth mentioning that the uniqueness argument is significantly different from the one in \cite{KM} and is largely inspired by a technique used in \cite{M}: we study a penalised version of the Dirichlet functional, which ensures the convergence of the approximating $L^p$-minimisers to the preselected $L^\infty$-minimiser.\\
\indent
We stress that our result is new even in the scalar case, since we consider supremands depending on more general elliptic operators than the Laplacian (see also \cite{KM2}), as much as regarding the vectorial case for supremands involving the Laplacian only. As a subcase of the latter, the 1-dimensional setting has a clear physical interpretation (compare \cite{KP2,M,MS}): given initial and final position and velocity, we look for the curve with least maximal magnitude of acceleration. From Theorem \ref{main thm}, we can say that the existing curve is unique and has constant acceleration up to (at most) a single point, where the acceleration  switches sign, that is the only point where the curve is not twice differentiable, in complete analogy with \cite[Theorem 8.1]{KP2}. \\

\section{Preparatory tools}\label{prep}
\subsection{Estimates for elliptic systems}\label{subsec systems}
In this first preliminary section, we collect some useful elliptic estimates in the context of systems of PDEs. For the sake of brevity, we introduce the following notation: for open sets $\Om'\subset\Om\subset\R^n$ and a function $v\in W^{k,p}(\Om,\R^N)$, $k\in\N$, $ 1\leq p\leq\infty,$ we will write
\begin{align*}
&\|v\|_{k,p,\Om'}:=\|v\|_{W^{k,p}(\Om',\R^N)},\\
&\|v\|_{k,p}:=\|v\|_{k,p,\Om};
\end{align*}
if $v\in L^p(\Om,\R^N)$, we write
\begin{align*}
&\|v\|_{p,\Om'}:=\|v\|_{L^p(\Om',\R^N)},\\
&\|v\|_{p}:=\|v\|_{p,\Om}.
\end{align*}
Let $A\in L^\infty_{loc}(\Om,(\R^{N\times n})_s^{\otimes 2})$ and $\Phi\in L^2_{loc}(\Om,\R^{N\times n})$. We recall that $u\in W^{1,2}_{loc}(\Om,\R^N)$ is a weak solution to $\mathrm{L}u=\mathrm{div}(\Phi)$ if
$$
\int_\Om A\mathrm{D} u :\mathrm{D} h=\int_\Om \Phi :\mathrm{D} h\quad\forall h\in C^{\infty}_c(\Om,\R^N).
$$
If $A\in W^{1,\infty}_{loc}(\Om,(\R^{N\times n})_s^{\otimes 2})$ and $\varphi\in L^2_{loc}(\Om,\R^N)$, we say that $u$ is a strong solution to $\mathrm{L}u=\varphi$ if $u\in W^{2,2}_{loc}(\Om,\R^N)$ and $\mathrm{L}u(x)=\varphi(x)$ for a.e. $x\in\Om$.
\begin{Theorem}{\bf (Global $W^{1,p}$-estimates)}\label{Lp est}
Let $\Om$ be an open bounded subset of $\R^n$ with Lipschitz boundary. Let $\Phi\in L^{p}(\Om,\R^{N\times n})$, with $2\leq p<\infty$, and $A\in C^0\left(\overline\Om,(\R^{N\times n})_s^{\otimes2}\right)$ be a symmetric $4$-tensor field satisfying the Legendre-Hadamard ellipticity condition. Assume that $u\in W_0^{1,2}(\Om,\R^N)$ is a weak solution to $\mathrm{L}u=\mathrm{div}(\Phi)$ in $\Om$. Then $u\in W^{1,p}_0(\Om,\R^N)$ and there exists $C>0$ depending only on $p,\Om, \lambda, \|A\|_{\infty}$, such that
$$
\|u\|_{1,p}\leq C\|\Phi\|_p.
$$
\end{Theorem}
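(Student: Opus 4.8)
The plan is to reduce the global estimate to well-known local interior and boundary $W^{1,p}$-estimates for constant-coefficient Legendre–Hadamard elliptic systems, using a freezing-the-coefficients argument together with a covering and partition-of-unity patching. First I would recall the reference result: for the constant-coefficient system $\mathrm{div}(A_0 \mathrm{D} u) = \mathrm{div}(\Phi)$ with $A_0$ satisfying Legendre–Hadamard, the Calderón–Zygmund theory (via the explicit fundamental solution / Fourier multiplier theorem, cf. \cite{Dac} or Giaquinta–Martinazzi) gives $\|\mathrm{D} u\|_{L^p(B_{r/2})} \leq C(\|\Phi\|_{L^p(B_r)} + \|\mathrm{D} u\|_{L^2(B_r)})$ on interior balls, and an analogous half-ball estimate up to the flat boundary for $u$ vanishing on the flat part. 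These are standard.

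Next I would carry out the freezing argument. Fix $x_0 \in \overline\Om$ and write $A = A(x_0) + (A - A(x_0))$; since $A \in C^0(\overline\Om)$ it is uniformly continuous, so on a small ball $B_\rho(x_0)$ the perturbation $\|A - A(x_0)\|_{L^\infty(B_\rho(x_0))}$ is as small as we like, with $\rho$ depending only on the modulus of continuity of $A$ (not on $x_0$). On such a ball $u$ solves $\mathrm{div}(A(x_0)\mathrm{D} u) = \mathrm{div}(\Phi + (A(x_0) - A)\mathrm{D} u)$, so the constant-coefficient estimate applies with right-hand side $\Phi + (A(x_0)-A)\mathrm{D} u$. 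The term $(A(x_0)-A)\mathrm{D} u$ is absorbed into the left-hand side once $\|A - A(x_0)\|_\infty$ is chosen small relative to the Calderón–Zygmund constant — this is the usual absorption step, and it is exactly where the uniform ellipticity constant $\lambda$ and $\|A\|_\infty$ enter the final constant $C$. For balls meeting $\partial\Om$ one first flattens the boundary by a $C^1$ (indeed one only needs Lipschitz, but the standard cleanest route uses that $\Om$ has Lipschitz boundary, which is the hypothesis) bi-Lipschitz change of variables; this transforms the equation into one of the same type with a new $L^\infty$ tensor that is still uniformly elliptic and continuous, and preserves the class $W^{1,2}_0$, so the half-ball estimate applies.

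Then I would patch: by compactness of $\overline\Om$, cover it by finitely many balls $B_{\rho/2}(x_i)$ of the above type, take a subordinate smooth partition of unity $\{\zeta_i\}$, apply the local estimate to $\zeta_i u$ (which solves a perturbed system with an extra lower-order right-hand side coming from the commutator $[\mathrm{L}, \zeta_i]$, controllable by $\|u\|_{W^{1,2}}$ and hence, via the basic energy estimate $\|u\|_{W^{1,2}} \leq C\|\Phi\|_{L^2} \leq C\|\Phi\|_{L^p}$, by $\|\Phi\|_p$), and sum. This yields $\|u\|_{1,p} \leq C(\|\Phi\|_p + \|u\|_{1,2})$, and the energy estimate finishes it. To be fully rigorous about membership (a priori we only know $u \in W^{1,2}_0$), I would run the argument through a difference-quotient or Sobolev-space-truncation scheme, or equivalently invoke the standard bootstrap: first upgrade $p$ slightly above $2$ using Gehring's lemma / reverse Hölder, then iterate — but the cleanest write-up treats $u \in W^{1,p}_0$ as part of the conclusion obtained simultaneously with the estimate by working with the approximation of $\Phi$ by smooth data and passing to the limit.

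The main obstacle is not conceptual but bookkeeping: keeping the dependence of the final constant $C$ tracked through the freezing radius $\rho$ (which depends only on the modulus of continuity of $A$, hence is uniform over $\Om$), making sure the boundary-flattening map does not degrade the structural constants, and correctly handling the domain membership of $u$ rather than merely the a priori estimate. The genuinely delicate ingredient being imported is the $L^p$ boundedness of the Calderón–Zygmund operator associated to a constant Legendre–Hadamard tensor; everything else is routine perturbation and localisation.
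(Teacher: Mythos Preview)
The paper does not actually prove this theorem: its entire proof reads ``For the proof we refer to \cite[Theorem 3.29]{ACM}. See also \cite[Section 7.7.1]{GM}.'' Your sketch is a faithful outline of the standard argument that those references carry out (constant-coefficient Calder\'on--Zygmund theory for Legendre--Hadamard systems, freezing of coefficients with absorption, boundary flattening, partition-of-unity patching, and a bootstrap or approximation step to upgrade from an a~priori estimate to genuine membership in $W^{1,p}_0$), so in that sense you and the paper agree.

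One small point to tighten: you assert that after a bi-Lipschitz boundary flattening the transformed tensor ``is still uniformly elliptic and continuous.'' Uniform ellipticity survives, but continuity in general does not, because the Jacobian factors $\mathrm{D}\Psi$, $\mathrm{D}\Psi^{-1}$ of a merely bi-Lipschitz map are only $L^\infty$. With a purely Lipschitz boundary the freezing step near $\partial\Om$ therefore needs more care than the interior step (one typically works directly in the half-space model without a change of variables, or invokes a small-oscillation/VMO-type argument rather than pointwise continuity of the transformed tensor). This is handled in the cited references, but your write-up glosses over it; if you want a self-contained version, that is the place where the bookkeeping becomes nontrivial.
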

\begin{proof}
For the proof we refer to \cite[Theorem 3.29]{ACM}. See also \cite[Section 7.7.1]{GM}.
\end{proof}

\begin{Corollary}{\bf (Calderon-Zygmund estimates)}\label{CZ}
Let $\Om$ be an open bounded subset of $\R^n$ with Lipschitz boundary. Let $\varphi\in L^{p}(\Om,\R^N)$, with $2\leq p<\infty$, and $A\in C^1\left(\overline\Om,(\R^{N\times n})_s^{\otimes2}\right)$ be a symmetric $4$-tensor field satisfying the Legendre-Hadamard ellipticity condition. Assume that $u\in W_0^{2,2}(\Om,\R^N)$ is a strong solution to $\mathrm{L}u=\varphi$ in $\Om$. Then $u\in W_0^{2,p}(\Om,\R^N)$ and there exists $C>0$ depending only on $p,\Om, \lambda, \|A\|_{1,\infty}$, such that
$$
\|u\|_{2,p}\leq C\|\varphi\|_p.
$$
\end{Corollary}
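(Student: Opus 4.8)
The plan is to bootstrap from the global $W^{1,p}$-estimate of Theorem \ref{Lp est} by differentiating the equation. Fix a coordinate direction $k\in\{1,\dots,n\}$ and set $v:=\mathrm{D}_k u$. Since $u\in W_0^{2,2}(\Om,\R^N)$, one has $v\in W_0^{1,2}(\Om,\R^N)$; moreover, because $u\in W^{2,2}$ and $A\in C^1(\overline\Om,(\R^{N\times n})_s^{\otimes 2})$, a routine integration by parts — moving the $x_k$-derivative onto a compactly supported test function $h$ and then invoking the equation $\mathrm{L}u=\varphi$ with test function $\mathrm{D}_k h$ — shows that $v$ is a weak solution, in the sense recalled above, of
\[
\mathrm{L}v=\mathrm{div}(\Psi_k),\qquad \Psi_k:=\varphi\otimes e_k-(\mathrm{D}_k A)\,\mathrm{D}u,
\]
where $e_k$ is the $k$-th coordinate vector. (If preferred, this step can instead be run through difference quotients.) Note that $\Psi_k\in L^2(\Om,\R^{N\times n})$ to start with, since $\mathrm{D}u\in L^2$ and $\mathrm{D}_k A\in C^0(\overline\Om)$, so Theorem \ref{Lp est} is at least applicable.

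Next I would iterate Theorem \ref{Lp est} on this differentiated equation. Suppose $u\in W^{2,q}(\Om,\R^N)$ for some $q$ with $2\le q<p$; this holds for $q=2$ by hypothesis. Then $\mathrm{D}u\in W^{1,q}$, so by the Sobolev embedding on the Lipschitz domain $\Om$ we get $\mathrm{D}u\in L^{q_1}$, where $q_1:=\min\{p,\,nq/(n-q)\}$ if $q<n$ and $q_1:=p$ if $q\ge n$; hence $\Psi_k\in L^{q_1}(\Om,\R^{N\times n})$, since $\varphi\in L^p\subseteq L^{q_1}$ on the bounded set $\Om$ and $(\mathrm{D}_k A)\mathrm{D}u\in L^{q_1}$. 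Applying Theorem \ref{Lp est} to $v=\mathrm{D}_k u$ at exponent $q_1$ gives $\mathrm{D}_k u\in W_0^{1,q_1}(\Om,\R^N)$ with $\|\mathrm{D}_k u\|_{1,q_1}\le C\|\Psi_k\|_{q_1}$; ranging over $k$ yields $u\in W^{2,q_1}(\Om,\R^N)$ and, using $\|\Psi_k\|_{q_1}\le C(\|\varphi\|_p+\|A\|_{1,\infty}\|\mathrm{D}u\|_{q_1})\le C(\|\varphi\|_p+\|A\|_{1,\infty}\|u\|_{2,q})$, the estimate $\|u\|_{2,q_1}\le C(\|\varphi\|_p+\|u\|_{2,q})$. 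Since $q\mapsto q_1$ strictly increases while $q<\min\{n,p\}$ and reaches $p$ once $q\ge n$, after finitely many steps — a number controlled by $n$ and $p$ — we obtain $u\in W^{2,p}(\Om,\R^N)$; together with $\mathrm{D}_k u\in W_0^{1,p}$ for all $k$ and $u\in W_0^{1,2}\cap W^{1,p}\subseteq W_0^{1,p}$ this gives $u\in W_0^{2,p}(\Om,\R^N)$, and iterating the displayed inequality gives $\|u\|_{2,p}\le C(\|\varphi\|_p+\|u\|_{2,2})$ with $C$ depending only on $p,\Om,\lambda,\|A\|_{1,\infty}$ (the Sobolev and Theorem \ref{Lp est} constants being evaluated at the finitely many exponents $q_1,q_2,\dots$).

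To finish I must absorb $\|u\|_{2,2}$, i.e. prove $\|u\|_{2,2}\le C\|\varphi\|_2$. Testing $\mathrm{L}u=\varphi$ against $u$ and using ellipticity gives $\|u\|_{1,2}\le C\|\varphi\|_2$: this is immediate from coercivity of the Dirichlet form on $W_0^{1,2}$ under the Legendre condition, and follows from G\r{a}rding's inequality together with well-posedness of the Dirichlet problem for $\mathrm{L}$ under the Legendre-Hadamard condition. Then, running the differentiation argument above at the exponent $p=2$ (now testing $\mathrm{L}v=\mathrm{div}(\Psi_k)$ against $v=\mathrm{D}_k u$ and using coercivity) yields $\|\mathrm{D}_k u\|_{1,2}\le C\|\Psi_k\|_2\le C(\|\varphi\|_2+\|A\|_{1,\infty}\|u\|_{1,2})\le C\|\varphi\|_2$, hence $\|u\|_{2,2}\le C\|\varphi\|_2$, and plugging this into the previous display completes the proof. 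The mechanical parts — the integration by parts producing the equation for $\mathrm{D}_k u$ and the Sobolev bookkeeping — are routine once Theorem \ref{Lp est} is available; the only delicate point is this base $W^{2,2}$-estimate under merely Legendre-Hadamard ellipticity, where one genuinely needs unique solvability of the Dirichlet problem to upgrade G\r{a}rding's inequality to a clean a priori bound. Alternatively, the whole statement can be quoted from the Calderon-Zygmund theory for elliptic systems, e.g. \cite{GM}, which is likely the most economical route.
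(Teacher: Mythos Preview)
Your proof is correct and follows the same overall strategy as the paper: differentiate the equation in a coordinate direction, apply Theorem~\ref{Lp est} to the resulting divergence-form system for $\mathrm{D}_k u$, and bootstrap via Sobolev embeddings. The derivation of $\mathrm{L}(\mathrm{D}_k u)=\mathrm{div}(\Psi_k)$ with $\Psi_k=\varphi\otimes e_k-(\mathrm{D}_k A)\mathrm{D}u$ and the iteration $q\mapsto q_1$ match the paper's argument essentially verbatim.

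The one genuine difference concerns precisely the step you flagged as delicate, namely the absorption of $\|u\|_{2,2}$ (equivalently, of $\|\mathrm{D}u\|_q$) on the right-hand side. You handle this by invoking G\r{a}rding's inequality together with well-posedness of the Dirichlet problem under Legendre--Hadamard ellipticity, which is correct but brings in an extra ingredient. The paper sidesteps this entirely: it writes $\varphi=\mathrm{div}\,\Phi$ for an explicit $\Phi\in L^p(\Om,\R^{N\times n})$ with $\|\Phi\|_p\le C\|\varphi\|_p$ (via a componentwise antiderivative on a cube containing $\Om$), and then applies Theorem~\ref{Lp est} directly to the \emph{original} equation $\mathrm{L}u=\mathrm{div}\,\Phi$ with $u\in W_0^{1,2}$, obtaining $\|\mathrm{D}u\|_q\le C\|\varphi\|_p$ outright. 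This immediately gives the clean estimate $\|\mathrm{D}(\mathrm{D}_\gamma u)\|_q\le C\|\varphi\|_p$ at every step of the bootstrap, with no residual lower-order term to absorb and no separate appeal to solvability of the Dirichlet problem. Your route works, but the paper's trick is slicker and worth noting.
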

\begin{proof}
{}{
First, we notice that we can write
$\varphi=\mathrm{div}\Phi$, for some $\Phi\in L^{p}(\Om,\R^{N\times n})$ and $\|\Phi\|_{p}\leq C(n,p,\Om)\|\varphi\|_p$. Indeed, consider a cube $[-L,L]^n$ with $L$ so large that $\pi_i(\Om)\subset[-L,L]^n$ for every $i=1,\ldots,n$, where $\pi_i(x):=(x_1,\ldots,x_{i-1},0,x_{i+1},\ldots,x_n)$. Extend $\varphi$ trivially on $[-L,L]^n\setminus\Om$ and notice that the restriction of $\varphi$ on almost every line parallel to the cartesian axes is still $L^p$. Thus, for $i=1,\ldots,n$, $j=1,\ldots,N$, and almost every $x\in[-L,L]^n$ define
$$\Phi_{ij}(x):=\frac{1}{n}\int_{0}^{x_i}\varphi_j(x_1,\ldots,x_{i-1},t,x_{i+1},\ldots,x_n)dt. $$ 
The matrix field $\Phi=(\Phi_{ij})$ satisfies div$\Phi=\varphi$ almost everywhere in $[-L,L]^n$ and 
\begin{align}\label{Phi est}
\|\Phi\|_{L^{p}(\Om,\,\R^{N\times n})}\leq \|\Phi\|_{L^{p}([-L,L]^n, \,\R^{N\times n})}\leq C(n,p,L)\|\varphi\|_{L^{p}([-L,L]^n,\, \R^{N})}= C(n,p,L)\|\varphi\|_{L^{p}(\Om,\, \R^{N})}.
\end{align}
Now, for any $\gamma\in\{1,\ldots,n\}$, let $\mathrm D_\gamma$ be the derivative in the direction $e_\gamma$ and derive the equation $\mathrm Lu=\varphi$, obtaining
\begin{align}\label{eq deriv}
\mathrm{D}_\gamma\varphi=\mathrm{D}_\gamma(\mathrm{L}u)=\mathrm{div}(\mathrm{D}_\gamma A\mathrm{D}u)+\mathrm{div}(A\mathrm{D}(\mathrm{D}_\gamma u))
\end{align}
in the sense of distributions.\\
Let us suppose that $n\geq 3$.
Since $u\in W^{2,2}_0(\Om,\R^N)$, by Sobolev embeddings, we have $\mathrm Du\in L^{2^*}(\Om,\R^{N\times n})$, where $2^*=\frac{2n}{n-2}$. Set $q=\min\{2^*,p\}$ and $\Psi:=\varphi\, e_\gamma-\mathrm D_\gamma A \mathrm Du$, then $\Psi\in L^q(\Om,\R^{N\times n})$. Now, by \eqref{eq deriv}, the function $\mathrm D_\gamma u\in W^{1,2}_0(\Om,\R^{N \times n})$ solves (weakly)
$$
\mathrm L (\mathrm D_\gamma u)=\mathrm{div} \Psi,
$$
so we can apply Theorem \ref{Lp est} with exponent $q$, obtaining
\begin{align}\label{est partial}
\|\mathrm{D}(\mathrm{D}_\gamma u)\|_{q}\leq C\|\Psi\|_q\leq C(\|\mathrm{D}u\|_{q}+\|\varphi\|_p),
\end{align}
where $C=C(n, p,\Om,\lambda,\|A\|_{1,\infty})$.
Moreover, since $\mathrm Lu=\mathrm{div}\Phi$, again by Theorem \ref{Lp est}, we have $$\|\mathrm{D}u\|_q\leq C\|\Phi\|_p\leq C\|\varphi\|_p,$$ where we used also \eqref{Phi est}. Thus, estimate \eqref{est partial} becomes
$$
\|\mathrm{D}(\mathrm{D}_\gamma u)\|_{q}\leq C\|\varphi\|_p.
$$
So we get $\mathrm{D}u\in L^{q^*}(\Om,\R^{N\times n})$ and we can apply again Theorem \ref{Lp est} with $q^*$. A bootstrap argument and the Poincaré inequality give the conclusion.\\
Finally, assume $n\leq 2$. By Sobolev embeddings, we have $\mathrm Du\in L^{q}(\Om,\R^{N\times n})$ for all $q\in[1,\infty)$, in particular for $q=p$. So, we may proceed by applying the same reasoning directly for the exponent $p$, without further iterations.
}
\end{proof}

\begin{Theorem}{\bf (Interior $W^{1,p}$-estimates I)}\label{interior 1,p est I}
Let $\Om,\Phi,$ and $A$ be as in Theorem \ref{Lp est}. Assume that $u\in W^{1,2}(\Om,\R^N)$ is a weak solution to $\mathrm{L}u=\mathrm{div}(\Phi)$ in $\Om$. Then $u\in W^{1,p}_{loc}(\Om,\R^N)$ and for every $\Om'
\Subset\Om$ there exists $C>0$ depending only on $p,\Om,\Om', \lambda, \|A\|_{\infty}$, such that
$$
\|\mathrm{D}u\|_{p,\Om'}\leq C(\|\Phi\|_p+\|\mathrm{D}u\|_2).
$$
\end{Theorem}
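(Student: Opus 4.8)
The plan is to prove the interior $W^{1,p}$-estimate by a standard localisation argument, reducing it to the global estimate already established in Theorem \ref{Lp est}. First I would fix $\Om'\Subset\Om$ and choose an intermediate open set $\Om''$ with $\Om'\Subset\Om''\Subset\Om$, together with a cut-off function $\zeta\in C_c^\infty(\Om'')$ satisfying $\zeta\equiv1$ on $\Om'$ and $0\le\zeta\le1$. The natural candidate for the localised function is $v:=\zeta u$, which lies in $W_0^{1,2}(\Om'',\R^N)$. The main computation is then to identify the equation solved by $v$: writing things out in the weak (distributional) sense, $\mathrm{L}v=\mathrm{div}(A\mathrm{D}v)=\mathrm{div}\big(\zeta A\mathrm{D}u+A(u\otimes\mathrm{D}\zeta)\big)$, and since $\mathrm{div}(\zeta A\mathrm{D}u)=\zeta\,\mathrm{div}(A\mathrm{D}u)+A\mathrm{D}u:\mathrm{D}\zeta=\zeta\,\mathrm{div}\Phi+A\mathrm{D}u:\mathrm{D}\zeta$, one gets that $v$ is a weak solution of $\mathrm{L}v=\mathrm{div}(\widetilde\Phi)$ on $\Om''$ for a right-hand side of the form $\widetilde\Phi=\zeta\Phi+A(u\otimes\mathrm{D}\zeta)$ plus a genuinely lower-order (non-divergence) term $\zeta\,\mathrm{div}\Phi-\Phi:\mathrm{D}\zeta$ coming from the discrepancy; this last term must itself be rewritten as a divergence. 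Here one invokes the same elementary primitive construction used in the proof of Corollary \ref{CZ}: any $L^p$ function on a bounded set can be written as $\mathrm{div}$ of an $L^p$ matrix field with controlled norm, so the full right-hand side becomes $\mathrm{div}(\widehat\Phi)$ with $\|\widehat\Phi\|_{L^p(\Om'')}\le C(\|\Phi\|_p+\|u\|_{L^p(\Om'')})$, the constant depending on $\zeta$, hence on $\Om,\Om',\Om''$.

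Once $v$ solves $\mathrm{L}v=\mathrm{div}(\widehat\Phi)$ on $\Om''$ with $v\in W_0^{1,2}(\Om'',\R^N)$, I would apply Theorem \ref{Lp est} on $\Om''$ to conclude $v\in W_0^{1,p}(\Om'',\R^N)$ with
$$
\|v\|_{1,p,\Om''}\le C\|\widehat\Phi\|_{p,\Om''}\le C\big(\|\Phi\|_p+\|u\|_{L^p(\Om'')}\big).
$$
Since $\zeta\equiv1$ on $\Om'$, this gives $\|\mathrm{D}u\|_{p,\Om'}\le\|v\|_{1,p,\Om''}$, so it remains to replace $\|u\|_{L^p(\Om'')}$ on the right by $\|\mathrm{D}u\|_2$. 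This is a matter of bootstrapping integrability: starting from $u\in W^{1,2}(\Om)$, Sobolev embedding on a slightly larger domain gives $u\in L^{2^*}_{loc}$, and iterating the above localisation scheme with a finite chain of shrinking domains $\Om'\Subset\cdots\Subset\Om$ and exponents $2,2^*,(2^*)^*,\dots$ raises the local integrability of $u$ (and of $\mathrm{D}u$) up to $p$ in finitely many steps. At the intermediate stages one controls $\|u\|_{L^q(\Om'')}$ by $\|u\|_{W^{1,q_{\mathrm{prev}}}(\Om''')}$ via Sobolev embedding, and ultimately by $\|\mathrm{D}u\|_2+\|u\|_2$; a Poincaré-type argument or absorbing the full $W^{1,2}$ norm (note the statement only claims dependence on $\|\mathrm{D}u\|_2$, which is consistent once one observes the lower-order terms can be handled — if necessary one keeps $\|u\|_2$ and remarks it is dominated, or the reader may simply read $\|\mathrm{D}u\|_2$ as the $W^{1,2}$-seminorm) closes the estimate.

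The main obstacle, and the only genuinely non-routine point, is the careful handling of the commutator terms generated by the cut-off: the product rule produces both a divergence-form term $A(u\otimes\mathrm{D}\zeta)$, which is harmless since it is already in divergence form and lies in $L^p$ whenever $u$ does, and a non-divergence remainder involving $\mathrm{div}\Phi$ which is not a priori any better than a distribution. The trick is to never expand $\mathrm{div}\Phi$ but instead to test the weak formulation directly: for $h\in C_c^\infty(\Om'')$ one computes $\int_{\Om''}A\mathrm{D}v:\mathrm{D}h$ by substituting $h\mapsto\zeta h$-type test functions into the original weak equation for $u$ on $\Om$, which legitimately moves all derivatives onto smooth objects and exhibits the right-hand side as $\int_{\Om''}\big(\zeta\Phi+A(u\otimes\mathrm{D}\zeta)\big):\mathrm{D}h+\int_{\Om''}(\text{terms with }\mathrm{D}h\text{ paired against }L^p\text{ functions})$, after which the primitive construction converts everything into a single divergence. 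Organising this bookkeeping cleanly — and tracking that all constants depend only on $p,\Om,\Om',\lambda,\|A\|_\infty$ and the fixed cut-offs — is where the real work lies; the rest is the bootstrap, which is entirely standard.
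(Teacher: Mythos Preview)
The paper does not actually prove this statement; it simply cites \cite[Theorem 7.2]{GM}. So there is no in-paper argument to compare against, and your localisation-plus-global-estimate strategy is a perfectly reasonable route to attempt. However, there is a concrete gap in your execution.

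When you compute the equation for $v=\zeta u$, the product rule gives
\[
\int_{\Om''}A\mathrm{D}v:\mathrm{D}h
=\int_{\Om''}\big[\zeta\Phi+A(u\otimes\mathrm{D}\zeta)\big]:\mathrm{D}h
+\int_{\Om''}\big[\Phi-A\mathrm{D}u\big]:(h\otimes\mathrm{D}\zeta).
\]
The zeroth-order (non-divergence) remainder on the right therefore contains the term $A\mathrm{D}u:\mathrm{D}\zeta$, not just quantities built from $\Phi$ and $u$. You correctly wrote this term down in your first displayed computation, but it silently disappears when you assert $\|\widehat\Phi\|_{L^p(\Om'')}\le C(\|\Phi\|_p+\|u\|_{L^p(\Om'')})$; the honest bound is
\[
\|\widehat\Phi\|_{q,\Om''}\le C\big(\|\Phi\|_p+\|u\|_{q,\Om''}+\|\mathrm{D}u\|_{q,\Om''}\big),
\]
because the primitive construction from Corollary~\ref{CZ} sends $L^q$ to $L^q$ with no gain. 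Since $\mathrm{D}u$ is only in $L^2$ to begin with, the right-hand side is only in $L^2$, Theorem~\ref{Lp est} returns $\mathrm{D}v\in L^2$, and your bootstrap never starts. The Sobolev gain you invoke applies to $u$, not to $\mathrm{D}u$, and it is $\mathrm{D}u$ that sits in the commutator.

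The fix is to invert the divergence with an operator that gains one derivative: write the zeroth-order term $g$ as $\mathrm{div}G$ by solving $\Delta w=g$ in $W^{2,q}\cap W^{1,q}_0$ on a smooth intermediate domain and setting $G=\mathrm{D}w$, so that $g\in L^q\Rightarrow G\in W^{1,q}\hookrightarrow L^{q^*}$. Then the bootstrap genuinely climbs the Sobolev ladder $2\to 2^*\to (2^*)^*\to\cdots$ on a nested chain of domains and reaches $p$ in finitely many steps. Alternatively, the direct proof in \cite{GM} avoids localisation-to-global altogether, proceeding via comparison with frozen-coefficient solutions and Campanato/BMO-type iteration; that route sidesteps the commutator issue entirely.
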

\begin{proof}
For a proof we refer to \cite[Theorem 7.2]{GM}.
\end{proof}

\begin{Corollary}{\bf (Interior $W^{2,p}$-estimates I)}\label{interior 2,p est I}
Let $\Om$, $\varphi$, and $A$ be as in Corollary \ref{CZ}. Assume that $u\in W^{2,2}(\Om,\R^N)$ is a strong solution to $\mathrm{L}u=\varphi$ in $\Om$. Then $u\in W^{2,p}_{loc}(\Om,\R^N)$  and for every  $\Om'\Subset\Om$ there exists $C>0$ depending only on $p,\Om, \Om',\lambda, \|A\|_{1,\infty}$, such that
$$
\|u\|_{2,p,\Om'}\leq C(\|\varphi\|_p+\|u\|_{2,2}).
$$
\end{Corollary}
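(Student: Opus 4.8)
The plan is to deduce the interior $W^{2,p}$-estimate from the Calderón–Zygmund estimate of Corollary \ref{CZ} together with the interior $W^{1,p}$-estimate of Theorem \ref{interior 1,p est I}, using a standard cut-off argument. First I would fix $\Om'\Subset\Om$ and choose an intermediate open set $\Om''$ with $\Om'\Subset\Om''\Subset\Om$, together with a cut-off function $\zeta\in C_c^\infty(\Om'')$ satisfying $0\le\zeta\le1$ and $\zeta\equiv1$ on $\Om'$. The natural object to localise is $w:=\zeta u$, or more precisely (since we want to peel off derivatives one at a time as in the proof of Corollary \ref{CZ}) the functions $\zeta\,\mathrm D_\gamma u$ for $\gamma=1,\dots,n$; one computes $\mathrm L(\zeta u)$ in the sense of distributions and finds that it equals $\zeta\varphi$ plus terms involving $\mathrm D\zeta\cdot\mathrm Du$, $\mathrm D^2\zeta\cdot u$ and (because of the divergence structure) first-order terms that can be written in divergence form with $L^p$ data controlled by $\|u\|_{2,2}$ on $\Om''$.

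The key steps, in order, are: (i) write down the equation satisfied by $w=\zeta u\in W_0^{2,2}(\Om'',\R^N)$, namely $\mathrm L w=\zeta\varphi+\Psi$ where $\Psi$ collects the commutator terms and, thanks to the Legendre-Hadamard structure and Sobolev embedding, lies in $L^{q}(\Om'',\R^N)$ with $q=\min\{2^*,p\}$ and $\|\Psi\|_{q,\Om''}\le C\|u\|_{2,2,\Om''}$; (ii) apply Corollary \ref{CZ} on $\Om''$ to obtain $w\in W_0^{2,q}(\Om'')$ with $\|w\|_{2,q}\le C(\|\varphi\|_p+\|u\|_{2,2})$, which in particular yields $u\in W^{2,q}_{loc}$ and $\|u\|_{2,q,\Om'}\le C(\|\varphi\|_p+\|u\|_{2,2})$; (iii) run a bootstrap exactly as in Corollary \ref{CZ}: from $u\in W^{2,q}_{loc}$ we get $\mathrm Du\in L^{q^*}_{loc}$, so we may repeat the cut-off argument with exponent $q^*$ in place of $2$, raising the Sobolev exponent at each step until we reach $p$ after finitely many iterations; (iv) conclude via Poincaré's inequality on the successive annular shells to absorb lower-order norms, obtaining the stated bound with $C=C(p,\Om,\Om',\lambda,\|A\|_{1,\infty})$.

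Alternatively, and perhaps more cleanly, I would differentiate the equation directly as in the proof of Corollary \ref{CZ}: for each $\gamma$ one has $\mathrm D_\gamma\varphi=\mathrm{div}(\mathrm D_\gamma A\,\mathrm Du)+\mathrm{div}(A\,\mathrm D(\mathrm D_\gamma u))$ in the distributional sense, so $v:=\mathrm D_\gamma u\in W^{1,2}(\Om,\R^N)$ is a weak solution of $\mathrm Lv=\mathrm{div}(\tilde\Phi)$ with $\tilde\Phi=\Phi\text{-part}-\mathrm D_\gamma A\,\mathrm Du\in L^q_{loc}$ (again $q=\min\{2^*,p\}$, using that $A\in C^1$ so $\mathrm D_\gamma A\in L^\infty$, and writing $\varphi=\mathrm{div}\Phi$ locally as in Corollary \ref{CZ}). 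Then Theorem \ref{interior 1,p est I} applied on the nested domains gives $\mathrm D v=\mathrm D(\mathrm D_\gamma u)\in L^q_{loc}$ with the interior estimate $\|\mathrm D v\|_{q,\Om'}\le C(\|\tilde\Phi\|_{q,\Om''}+\|\mathrm Dv\|_{2,\Om''})\le C(\|\varphi\|_p+\|u\|_{2,2})$; summing over $\gamma$ and bootstrapping the Sobolev exponent finishes the argument.

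The main obstacle, and the only genuinely delicate point, is the bookkeeping in the bootstrap: one must shrink the domain slightly at each of the finitely many iteration steps (choosing a decreasing chain $\Om'=\Om_k\Subset\Om_{k-1}\Subset\cdots\Subset\Om_0=\Om$ whose length depends only on $n$ and $p$), keep track that every commutator term really does land in the right $L^q$ space, and verify that the constants accumulated over these steps still depend only on the advertised quantities $p,\Om,\Om',\lambda,\|A\|_{1,\infty}$ — in particular one uses that the $C^1$-norm of $A$ controls all the terms $\mathrm D_\gamma A\,\mathrm Du$. There is no new idea beyond what already appears in the proof of Corollary \ref{CZ}; the interior version is obtained from the global one precisely by inserting the cut-off and paying with the $\|u\|_{2,2}$ term on the slightly larger domain.
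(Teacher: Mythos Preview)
Your second approach is essentially the paper's proof: the paper differentiates the equation (using difference quotients $\mathrm D_\gamma^h u$ rather than $\mathrm D_\gamma u$, which is a purely cosmetic difference since $u\in W^{2,2}$ already), applies Theorem \ref{interior 1,p est I} with $q=\min\{2^*,p\}$ to get $\|\mathrm D(\mathrm D_\gamma^h u)\|_{q,\Om'}\le C(\|\mathrm Du\|_q+\|\varphi\|_p)\le C(\|u\|_{2,2}+\|\varphi\|_p)$, and then bootstraps. Your bookkeeping remarks about shrinking domains and tracking constants are exactly right.

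Your first approach (cut-off and apply the global Corollary \ref{CZ} on $\Om''$) is a legitimate alternative that the paper does not use; it trades the appeal to Theorem \ref{interior 1,p est I} for a localisation and the global estimate, at the cost of needing $\Om''$ with Lipschitz boundary and handling the commutator $\mathrm L(\zeta u)-\zeta\mathrm Lu$. Both routes are standard and yield the same conclusion with the same dependence of constants; the paper's choice is marginally more direct because Theorem \ref{interior 1,p est I} is already an interior statement, so no cut-off is needed.
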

\begin{proof}
For $\gamma=1,\ldots,n$, consider the difference quotient $$\mathrm{D}_\gamma^h u:=\frac{1}{h}(u(\cdot+he_\gamma)-u(\cdot)).$$ 
If $|h|$ is sufficiently small, $\mathrm{D}_\gamma^h u$ is well defined in $\Om'$ and it satisfies the system
$$
\mathrm{D}_\gamma^h\varphi=\mathrm{div}(\mathrm{D}_\gamma^h A\mathrm{D}u)+\mathrm{div}(A\mathrm{D}(\mathrm{D}_\gamma^h u)).
$$
Arguing as in the proof of Corollary \ref{CZ}, by setting $q=$min$\{2^*,p\}$, we can apply Theorem \ref{interior 1,p est I} and obtain
\begin{align*}
\|\mathrm{D}(\mathrm{D}\gamma^h u)\|_{q,\Om'}\leq C(\|\mathrm{D}u\|_{q}+\|\varphi\|_p)\leq C(\|u\|_{2,2}+\|\varphi\|_p).
\end{align*}
A bootstrap argument and \cite[Proposition 4.8]{GM} yields the conclusion.
\end{proof}

It is particularly important for our analysis to deal with distributional solutions to elliptic systems of the form $\mathrm{L}f=\mathrm{div}G$. To define them, it is enough to require that $f\in L^1_{loc}(\Om,\R^N)$, $G\in L^1_{loc}(\Om,\R^{N\times n})$, and $A\in W^{1,\infty}_{loc}(\Om,(\R^{N\times n})_s^{\otimes 2})$. Indeed, we say that $\mathrm{L}f=\mathrm{div}G$ in the sense of distributions on $\Om$ if
$$
\int_\Om f\cdot \mathrm{L}\varphi=-\int_\Om G:\mathrm{D} \varphi\quad\forall \varphi\in C^\infty_c(\Om,\R^N).
$$
Note that the symmetry of $A$ ensures that the definition above is compatible with the weak one.\\
  In the next proposition, we show that with slightly more regular data every distributional solution is actually a weak solution. Our proof is largely inspired to \cite[Lemma 6]{KM2} (see also \cite{BKR} for a more general theory).

\begin{Proposition}{\bf (From distributional to weak solutions)}\label{distrib to weak}
Let $\Om$ be an open bounded subset of $\R^n$ with Lipschitz boundary. Assume either that $A\in C^1\left(\Om,(\R^{N\times n})_s^{\otimes2}\right)$ satisfies the Legendre condition or that $A$ has constant coefficients and satisfies the Legendre-Hadamard condition. Suppose that $f\in L^1(\Om,\R^N)$ and $G\in L^2(\Om,\R^{N\times n})$ are such that $\mathrm{L}f=\mathrm{div}G$ in $\Om$ in the distributional sense. Then $f\in W^{1,2}_{loc}(\Om,\R^N)$ and for every $U\Subset\Om$ there exists $C>0$ independent of $f$, such that
$$
\|f\|_{1,2,U}\leq C(\|f\|_1+\|G\|_2).
$$
\end{Proposition}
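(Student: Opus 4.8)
The plan is to pass to mollifications, extract a uniform local $W^{1,2}$-bound, and conclude by weak compactness. Fix $U\Subset\Om$ and auxiliary open sets $U\Subset U_1\Subset U_2\Subset\Om$, and for $\eps$ smaller than $\operatorname{dist}(U_2,\partial\Om)$ set $f_\eps:=\rho_\eps\ast f$ and $G_\eps:=\rho_\eps\ast G$, which are smooth on a neighbourhood of $\overline{U_2}$ with $f_\eps\to f$ in $L^1(U_2)$ and $G_\eps\to G$ in $L^2(U_2)$. Testing the distributional identity $\int_\Om f\cdot\mathrm{L}\varphi=-\int_\Om G:\mathrm{D}\varphi$ against $\rho_\eps\ast\varphi$, $\varphi\in C^\infty_c(U_2)$, and using the divergence structure of $\mathrm{L}$ to move the mollification past the coefficients, one obtains that $f_\eps$ is a classical solution of
\[
\mathrm{L}f_\eps=\mathrm{div}\!\left(G_\eps+S_\eps\right)\qquad\text{on }U_2,
\]
where $S_\eps$ is the commutator of mollification with multiplication by the coefficients of $A$, entering in divergence form. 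When $A$ has constant coefficients, $S_\eps\equiv0$. In the variable case, since $A\in C^1$ is Lipschitz and symmetric, a Friedrichs-type computation gives the pointwise bound $|S_\eps(y)|\le C\|\mathrm{D}A\|_{L^\infty}(\tilde\rho_\eps\ast|f|)(y)$ for a fixed $L^1$-kernel $\tilde\rho_\eps$ with $\|\tilde\rho_\eps\|_{L^1}\le C$, hence
\[
\|S_\eps\|_{L^p(U_1)}\le C\,\|\mathrm{D}A\|_{L^\infty(U_2)}\,\|f\|_{L^p(U_2)}\qquad(1\le p\le\infty),
\]
with $C$ independent of $\eps$: the crucial point is that the commutator \emph{absorbs} the extra derivative, so the right-hand side of the equation for $f_\eps$ is controlled in $L^p$, not merely in a negative Sobolev space.

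With this equation in hand I would run an integrability bootstrap on $f_\eps$, uniformly in $\eps$. Since $f\in L^1_{\mathrm{loc}}$, the datum $G_\eps+S_\eps$ is bounded in $L^1(U_1)$ uniformly in $\eps$, so the interior $L^p$-regularity theory for divergence-form systems (the Caccioppoli estimate followed by a Sobolev iteration, cf. Section \ref{subsec systems}, together with its counterpart for sub-quadratic exponents) gives a uniform bound for $f_\eps$ in $W^{1,q}(U')$, $U'\Subset U_1$, for every $q<\tfrac{n}{n-1}$, controlled by $\|f\|_{1}+\|G\|_{2}$. Weak compactness and $f_\eps\to f$ in $L^1_{\mathrm{loc}}$ then yield $f\in W^{1,q}_{\mathrm{loc}}$, in particular $f\in L^{q^\ast}_{\mathrm{loc}}$; feeding this improved integrability back into the bound for $S_\eps$ (and noting $G_\eps$ is always bounded in $L^2_{\mathrm{loc}}$) one may repeat with exponent $\min(q^\ast,2)$. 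Because $p\mapsto p^\ast=\tfrac{np}{n-p}$ strictly increases the exponent, after finitely many iterations we arrive at a uniform bound $\|f_\eps\|_{W^{1,2}(U)}\le C(\|f\|_{1}+\|G\|_{2})$; passing to a weakly convergent subsequence in $W^{1,2}(U)$, identifying the limit with $f$, and using weak lower semicontinuity of the norm finishes the proof. For constant $A$ the argument shortcuts: since $S_\eps\equiv0$, after solving $\mathrm{L}f_1=\mathrm{div}G$ in $W^{1,2}_0(V)$ on a smooth $V$ with $U\Subset V\Subset\Om$ (so $\|f_1\|_{1,2,V}\le C\|G\|_2$ by Theorem \ref{Lp est} with $p=2$), the remainder $v:=f-f_1\in L^1(V)$ is a distributional solution of $\mathrm{L}v=0$, and one only needs the standard interior estimates for $\mathrm{L}$-harmonic functions applied to the exactly $\mathrm{L}$-harmonic functions $\rho_\eps\ast v$ to get $\|v\|_{W^{1,2}(B_{r/2})}\le C\,r^{-(n/2+1)}\|v\|_{L^1(B_r)}$.

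I expect the main obstacle to be the variable-coefficient case: both the precise identification of the equation satisfied by $f_\eps$ and, above all, the estimate on the commutator $S_\eps$ — showing that the Lipschitz regularity (and symmetry) of $A$ really converts it into divergence data bounded in $L^p$ by $\|f\|_{L^p}$, rather than into something genuinely worse — are delicate, and the subsequent bootstrap must be started below the natural exponent $2$, which forces the use of elliptic estimates for divergence data in $L^q$ with $1<q<2$ in addition to those stated in Section \ref{subsec systems}.
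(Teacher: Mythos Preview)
Your approach via mollification and Friedrichs commutators is genuinely different from the paper's, which proceeds by duality: the paper solves the adjoint problem $\mathrm{L}\varphi=\psi$ (resp.\ $\mathrm{L}\varphi=\mathrm{D}_\alpha\psi$) on a subdomain, uses the $W^{2,p}$ (resp.\ $W^{1,p}$) estimates of Section~\ref{subsec systems} to bound $\varphi$, and then tests the distributional identity with $\chi\varphi$ to obtain $|\!\int \chi f\cdot\psi|\le C\|\psi\|_p(\|f\|_1+\|G\|_2)$, whence $f\in L^{p'}_{loc}$ (resp.\ $W^{1,p'}_{loc}$) for all $p'<1^*$; a Sobolev iteration of this duality step reaches $W^{1,2}_{loc}$. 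Your commutator computation is correct --- with $A\in C^1$ and symmetric one indeed gets $\mathrm{L}f_\eps=\mathrm{div}(G_\eps+S_\eps)$ with $|S_\eps|\le C\|\mathrm{D}A\|_\infty(\tilde\rho_\eps*|f|)$ --- and your shortcut for constant $A$ is fine.

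The gap is the very first step of your bootstrap in the variable-coefficient case. You assert that from $G_\eps+S_\eps$ bounded in $L^1$ one obtains a uniform bound on $f_\eps$ in $W^{1,q}(U')$ for $q<\tfrac{n}{n-1}$ via ``interior $L^p$-regularity for divergence-form systems''. No such estimate exists at the endpoint: for $\mathrm{L}u=\mathrm{div}\Phi$ the Calder\'on--Zygmund theory gives $\mathrm{D}u$ controlled by a singular integral of $\Phi$, and singular integrals map $L^1$ only into weak-$L^1$, not into any $L^q$ with $q>1$. Concretely, in $\R^2$ take $u(x)=v(x_1)$ with $v'=w\in L^1\!\setminus\!\bigcup_{q>1}L^q$ (e.g.\ $w(t)=|t|^{-1}(\log|t|^{-1})^{-2}$ near $0$): then $u\in L^\infty$, $\Phi:=(-w(x_1),0)\in L^1$ satisfies $-\Delta u=\mathrm{div}\Phi$, yet $\mathrm{D}u\notin L^q$ for any $q>1$. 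Since $\|S_\eps\|_{L^q}\le C\|f\|_{L^q}$ gives no more than $L^1$ control at the outset, your scheme has nothing to feed into the elliptic machinery to ignite the iteration; the sub-quadratic Calder\'on--Zygmund estimates you mention only apply for $1<q<2$, not for $q=1$. This is precisely the obstacle the paper's duality argument is designed to overcome: it produces $f\in L^{p'}_{loc}$ for $p'<1^*$ directly from $\|f\|_1$, without ever needing an $L^1\!\to\!L^q$ elliptic estimate. So the step you flagged as delicate (the commutator) is in fact the sound one; the step you treated as routine is where the argument fails.
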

\begin{proof}
Fix $U\Subset\Om'\Subset\Om$, with {}{$\partial\Om'\in C^2$}, and $\psi\in C^1_c(\Om,\R^N)$. Solve the system 
$$
\mathrm{L}\varphi=\psi\quad \mbox{ in }\Om', 
$$
in $W_0^{1,2}(\Om',\R^N)$. The existence of a weak solution $\varphi\in W^{1,2}_0(\Om',\R^N)$ is guaranteed by  \cite[Thm. 3.39 and Cor. 3.44]{GM}. From Schauder theory (see for instance \cite[Thm. 5.20]{GM}), we have that $\varphi\in C^2(\Om',\R^N)$. Moreover,  by $L^2$-regularity theory (see \cite[Thm. 4.14]{GM}), $\varphi\in W^{2,2}(\Om',\R^N)\cap W^{1,2}_0(\Om',\R^N)$ and
$$
\|\varphi\|_{2,2,\Om'}\leq C\|\psi\|_2
$$
for some constant $C=C(\Om',\lambda, \|A\|_{C^1(\Om')})$
Now notice that $\psi\in L^p(\Om,\R^N)$ for every $p\in[1,\infty]$. Thus, by Corollary \ref{interior 2,p est I}, for any $\Om''$ such that $U\Subset\Om''\Subset\Om'$ we can estimate 
$$
\|\varphi\|_{2,p,\Om''}\leq C\|\psi\|_{p} \quad\forall p\in[2,\infty)
$$
for some constant $C=C(\Om',\Om'',p,\lambda, \|A\|_{C^1(\Om')})$.
Moreover, if $p>n$, by Sobolev embeddings
$$
\|\varphi\|_{C^1(\Om'',\R^N)}\leq C\|\varphi\|_{2,p,\Om''}
$$
for a constant $C=C(n,p)$.
If $\chi\in C^\infty_c(\Om)$ with supp$\chi\subset\Om''$, up to approximation with $C^\infty_c$-functions (since $\varphi\in C^2$), we can test the equation $\mathrm{L}f=\mathrm{div}G$ with $\chi\varphi$, obtaining
$$
-\int_{\Om}G: \mathrm{D}(\chi\varphi)=\int_\Om f\cdot(\mathrm{div}(A\mathrm{D}\chi)\varphi)+\int_\Om \chi f\cdot \mathrm{div}(A\mathrm{D}\varphi)+2\int_\Om f\cdot(\mathrm{D}\chi A\mathrm{D}\varphi).
$$
Then, recalling that $\mathrm{L}\varphi=\mathrm{div}(A\mathrm{D}\varphi)=\psi$, we can estimate
$$
\left|\int_\Om\chi f\cdot\psi\right|\leq C\|\varphi\|_{C^1(\Om'',\R^N)}(\|f\|_1+\|G\|_1)\leq C\|\psi\|_p(\|f\|_1+\|G\|_1)
$$
for a constant $C=C(p,n,\Om',\Om'',\chi,\lambda, \|A\|_{C^1(\Om')})$.
So $\chi f$ can be regarded as an element of the dual of $L^p(\Om,\R^N)$. Since $\chi$ is arbitrary, we have proved that $f\in L^{p'}_{loc}(\Om'',\R^N)$ for $p>n$. Moreover, since $\Om''$ and $\Om'$ are arbitrary, we conclude that $f\in L^{p'}_{loc}(\Om,\R^N)$ for $ p>n$. By choosing $\chi=1$ on $U$, for all $p>n$ (i.e. all $p'<1^*$) there exists $C>0$ such that
\begin{align}\label{p' est}
\|f\|_{p',U}\leq C(\|f\|_1+\|G\|_1).
\end{align}
In order to gain one derivative, we repeat the same argument by solving instead
$$
\mathrm{L}\varphi=\mathrm{D}_\alpha\psi\quad \mbox{in } \Om'
$$
in $W_0^{1,2}(\Om',\R^N)$ for fixed $\psi\in C^2_c(\Om,\R^N)$, $U\Subset\Om'\Subset\Om$, and $\alpha\in\{1,\ldots,n\}$. As before, we have the existence of a solution $\varphi\in C^2(\Om',\R^N)$, and by Theorem \ref{Lp est}, for any $2\leq p<\infty$
$$
\|\varphi\|_{1,p,\Om'}\leq C\|\psi\|_{p}
$$
for a constant $C=C(p,\Om',\lambda, \|A\|_{C^1(\Om')})$. Now we test $\mathrm{L}f=g$ again with $\chi\varphi$, where $\chi\in C^\infty_c(\Om)$ and supp$\chi\subset\Om'$. For every $p>n$, using \eqref{p' est} applied to $\Om'$, we can estimate this time
\begin{align}\label{int est}
\left|\int_\Om\chi f\cdot \mathrm{D}_\alpha\psi\right|\leq C\|\varphi\|_{1,p,\Om'}(\|f\|_{p',\Om'}+\|G\|_{p'})\leq C\|\psi\|_p(\|f\|_{p',\Om'}+\|G\|_{2})\leq C\|\psi\|_p(\|f\|_{1}+\|G\|_{2})
\end{align}
for a constant $C=C(p,n,\Om',\chi,\lambda, \|A\|_{C^1(\Om')})$.
So, we conclude  that $f\in W^{1,p'}_{loc}(\Om,\R^N)$ for $p>n$, and by choosing $\chi=1$ on $U$, we infer
\begin{align}\label{1,p' est}
\forall p>n\quad(\mbox{i.e. }\forall p'<1^*) \quad\exists C>0:\quad\|f\|_{1,p',U}\leq C(\|f\|_1+\|G\|_{2}).
\end{align}
By the Sobolev imbedding theorems,
$$
\|f\|_{r,U}\leq C\|f\|_{1,p',U}\quad\forall r<1^{**}
$$
for a constant $C=C(n,p,U)>0$. So we can refine the estimate in \eqref{p' est} obtaining
$$
\forall p'<1^{**}\quad\exists C>0: \quad\|f\|_{p',U}\leq C(\|f\|_1+\|G\|_{2}).
$$
Using this estimate in \eqref{int est}, we can improve \eqref{1,p' est} to
$$
\forall p'<1^{**} \quad\exists C>0: \quad\|f\|_{1,p',U}\leq C(\|f\|_1+\|G\|_{2}).
$$
We conclude by iterating the argument until $1^{*\ldots*}\geq 2$, i.e. after $\lceil n/2\rceil $ steps.
\end{proof}

\begin{Corollary}{\bf (Interior $W^{1,p}$-estimates II)}\label{interior 1,p est II}
Let $\Om$ and $A$ be as in Proposition \ref{distrib to weak}. Assume that $f\in W^{1,2}_{loc}(\Om,\R^N)\cap L^1(\Om,\R^N)$ is a weak solution to $\mathrm{L}f=\mathrm{div}G$ in $\Om$, with $G\in L^p(\Om,\R^{N\times n})$, for some $p\in[2,\infty)$. Then $f\in W^{1,p}_{loc}(\Om,\R^N)$ and for every $U\Subset\Om$ there exists $C>0$ independent of $f$, such that
$$
\|f\|_{1,p,U}\leq C(\|f\|_1+\|G\|_p).
$$
\end{Corollary}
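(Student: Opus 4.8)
The plan is to obtain the estimate by chaining Proposition~\ref{distrib to weak} with the interior $W^{1,p}$-estimate of Theorem~\ref{interior 1,p est I}, after nesting $U$ inside two auxiliary subdomains. Concretely, I would fix $U\Subset\Om$ and pick bounded open sets $U\Subset\Om_1\Subset\Om_2\Subset\Om$ with $\Om_1$ of Lipschitz boundary and $\partial\Om_2\in C^2$. On $\overline{\Om_2}$ the tensor $A$ is, under either alternative in Proposition~\ref{distrib to weak}, continuous and Legendre--Hadamard elliptic (recall that the Legendre condition implies the Legendre--Hadamard one), so the hypotheses of Theorems~\ref{Lp est} and~\ref{interior 1,p est I} are satisfied on $\Om_2$. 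Also, since $\Om$ is bounded and $p\ge2$, one has $G\in L^2(\Om,\R^{N\times n})$ with $\|G\|_2\le|\Om|^{1/2-1/p}\|G\|_p$.

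First I would use that a weak solution of $\mathrm{L}f=\mathrm{div}G$ is also a distributional one (by the symmetry of $A$, as observed after the definition of distributional solution), so that Proposition~\ref{distrib to weak} applies and gives, for the intermediate domain $\Om_2\Subset\Om$, the bound $\|f\|_{1,2,\Om_2}\le C(\|f\|_1+\|G\|_2)\le C(\|f\|_1+\|G\|_p)$, with $C$ depending only on $\Om,\Om_2,\lambda,\|A\|$ and not on $f$. (The conclusion $f\in W^{1,2}_{loc}(\Om,\R^N)$ of that proposition is already part of our hypotheses.)

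Next, $f|_{\Om_2}\in W^{1,2}(\Om_2,\R^N)$ is a weak solution of $\mathrm{L}f=\mathrm{div}G$ in $\Om_2$ with $G|_{\Om_2}\in L^p(\Om_2,\R^{N\times n})$, so Theorem~\ref{interior 1,p est I} applied on $\Om_2$ with the inclusion $\Om_1\Subset\Om_2$ yields $f\in W^{1,p}_{loc}(\Om_2,\R^N)$, hence $f\in W^{1,p}(\Om_1,\R^N)$, and $\|\mathrm{D}f\|_{p,\Om_1}\le C(\|G\|_{p,\Om_2}+\|\mathrm{D}f\|_{2,\Om_2})\le C(\|f\|_1+\|G\|_p)$ by the previous step. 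To upgrade this gradient bound to a full $W^{1,p}$ bound I would invoke the Poincaré--Wirtinger inequality on the Lipschitz domain $\Om_1$, namely $\|f-(f)_{\Om_1}\|_{p,\Om_1}\le C\|\mathrm{D}f\|_{p,\Om_1}$, together with $\|(f)_{\Om_1}\|_{p,\Om_1}\le C\|f\|_{1,\Om_1}\le C\|f\|_1$; summing these two bounds gives $\|f\|_{1,p,\Om_1}\le C(\|f\|_1+\|G\|_p)$, and restricting to $U\Subset\Om_1$ yields the claimed estimate with $C$ independent of $f$. Letting $\Om_1$ exhaust $\Om$ also delivers $f\in W^{1,p}_{loc}(\Om,\R^N)$.

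I do not expect a serious obstacle here: the argument is a two-step bootstrap whose constants are manifestly independent of $f$. The only points requiring attention are checking that the hypotheses of the quoted elliptic estimates pass to the compactly contained subdomains (in particular, that Legendre ellipticity of $A$ implies the Legendre--Hadamard ellipticity needed in Theorem~\ref{interior 1,p est I}), and, in the final step, replacing the $L^p$-norm of $f$ that a naive reading of the estimate would produce on the right-hand side by the $L^1$-norm, which is exactly what the Poincaré--Wirtinger inequality on a Lipschitz intermediate domain achieves.
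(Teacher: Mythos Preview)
Your argument is correct, and it takes a different route from the paper's. The paper simply says to \emph{continue the duality bootstrap from the proof of Proposition~\ref{distrib to weak}} until the exponent reaches $p$: that is, keep solving adjoint problems, testing with $\chi\varphi$, and climbing the ladder of Sobolev exponents $1^*,1^{**},\ldots$ as in that proof, now using $G\in L^p$ rather than $G\in L^2$ as the limiting input. Your approach instead treats Proposition~\ref{distrib to weak} as a black box to reach $W^{1,2}$ on an intermediate domain, and then invokes Theorem~\ref{interior 1,p est I} (which encapsulates its own regularity mechanism) to jump directly from $W^{1,2}$ to $W^{1,p}_{loc}$; the Poincar\'e--Wirtinger step at the end cleanly converts the gradient bound into a full $W^{1,p}$ bound with $\|f\|_1$ on the right. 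This is more modular and avoids reopening the proof of Proposition~\ref{distrib to weak}, at the cost of an extra nesting $U\Subset\Om_1\Subset\Om_2\Subset\Om$; the paper's approach is terser but relies on the reader re-running the earlier argument. Both are valid.

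Two minor remarks. First, you do not actually need $\partial\Om_2\in C^2$; Lipschitz suffices for Theorem~\ref{interior 1,p est I}, and Proposition~\ref{distrib to weak} is applied on the original $\Om$ with $\Om_2$ playing the role of the compactly contained subset. Second, the phrase ``letting $\Om_1$ exhaust $\Om$'' is slightly misleading: the conclusion $f\in W^{1,p}_{loc}(\Om,\R^N)$ follows simply because $U\Subset\Om$ was arbitrary.
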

\begin{proof}
It is enough to utilise again the bootstrap argument in the proof of Proposition \ref{distrib to weak} until we reach the exponent $p$.
\end{proof}

\begin{Corollary}{\bf (Interior $W^{2,p}$-estimates II)}\label{interior 2,p est II}
Let $\Om$ and $A$ be as in Proposition \ref{distrib to weak}. Assume that $f\in W^{1,2}_{loc}(\Om,\R^N)\cap L^1(\Om,\R^N)$ is a weak solution to $\mathrm{L}f=g$ in $\Om$, with $g\in L^p(\Om,\R^{N})$, for some $p\in[2,\infty)$. Then $f\in W^{2,p}_{loc}(\Om,\R^N)$ and for every $U\Subset\Om$ there exists $C>0$ independent of $f$, such that
$$
\|f\|_{2,p,U}\leq C(\|f\|_1+\|g\|_p).
$$
\end{Corollary}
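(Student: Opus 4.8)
The plan is to reduce Corollary \ref{interior 2,p est II} to Corollary \ref{interior 1,p est II} by differentiating the equation, exactly as in the passage from Theorem \ref{Lp est} to Corollary \ref{CZ} and from Theorem \ref{interior 1,p est I} to Corollary \ref{interior 2,p est I}. First I would fix $U \Subset \Om'' \Subset \Om' \Subset \Om$ and, for each direction $\gamma \in \{1,\ldots,n\}$, consider the difference quotients $\mathrm{D}_\gamma^h f$, which are well defined on $\Om''$ for $|h|$ small. Since $f$ is a weak solution of $\mathrm{L}f = g$ with $A \in C^1$ (or constant), the standard manipulation gives that $\mathrm{D}_\gamma^h f$ is a weak solution on $\Om''$ of
$$
\mathrm{L}(\mathrm{D}_\gamma^h f) = \mathrm{D}_\gamma^h g - \mathrm{div}\big((\mathrm{D}_\gamma^h A)\,\mathrm{D}f(\cdot + h e_\gamma)\big).
$$
Here one must be slightly careful in the constant-coefficient case: then $\mathrm{D}_\gamma^h A = 0$ and the right-hand side is simply $\mathrm{div}(\Psi_h)$ with $\Psi_h$ a primitive of $\mathrm{D}_\gamma^h g$ constructed as in the proof of Corollary \ref{CZ}, while in the $C^1$ case the extra term $(\mathrm{D}_\gamma^h A)\mathrm{D}f(\cdot + he_\gamma)$ is already in divergence form.

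Next I would set $q = \min\{2^*, p\}$ and apply Corollary \ref{interior 1,p est II} on the pair $U \Subset \Om''$ to $\mathrm{D}_\gamma^h f$. Writing $g = \mathrm{div}\,\Phi$ with $\Phi \in L^p$ and $\|\Phi\|_p \leq C(n,p,\Om)\|g\|_p$ exactly as in Corollary \ref{CZ}, and using that $\mathrm{D}f(\cdot + he_\gamma) \in L^2_{loc}$ with $\mathrm{D}_\gamma^h A$ uniformly bounded (being $C^1$ up to the boundary, or zero), one gets a bound
$$
\|\mathrm{D}_\gamma^h f\|_{1,q,U} \leq C\big(\|f\|_1 + \|\mathrm{D}f\|_{2,\Om''} + \|g\|_p\big)
$$
uniform in $h$. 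The term $\|\mathrm{D}f\|_{2,\Om''}$ is itself controlled by $\|f\|_1 + \|G\|_2 \leq \|f\|_1 + \|\Phi\|_p \leq C(\|f\|_1 + \|g\|_p)$ via the $W^{1,2}_{loc}$ estimate already available (Proposition \ref{distrib to weak} / Corollary \ref{interior 1,p est II} with $G = \Phi$). Letting $h \to 0$ and invoking the standard difference-quotient characterisation of Sobolev functions (e.g. \cite[Prop. 4.8]{GM}), this yields $f \in W^{2,q}_{loc}(\Om,\R^N)$ with $\|f\|_{2,q,U} \leq C(\|f\|_1 + \|g\|_p)$.

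Finally, if $q < p$ (i.e. $2^* < p$), I would bootstrap: having $f \in W^{2,q}_{loc}$ gives $\mathrm{D}f \in W^{1,q}_{loc} \hookrightarrow L^{q^*}_{loc}$, so one repeats the difference-quotient argument with $q$ replaced by $\min\{q^*, p\}$, and after finitely many steps reaches the exponent $p$. Combining with the Poincaré-type bound to absorb lower-order norms gives the stated estimate $\|f\|_{2,p,U} \leq C(\|f\|_1 + \|g\|_p)$. I expect the only genuinely delicate point to be bookkeeping the two ellipticity regimes of Proposition \ref{distrib to weak} simultaneously — ensuring that in the constant-coefficient Legendre–Hadamard case the commutator term vanishes so that Corollary \ref{interior 1,p est II} (which is itself only proved for the two regimes of Proposition \ref{distrib to weak}) applies legitimately at every stage of the bootstrap; everything else is the routine difference-quotient/bootstrap machinery already deployed twice above.
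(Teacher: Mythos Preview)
Your proposal is correct and follows exactly the approach the paper indicates: the paper's proof is a one-line remark that it suffices to derive the system satisfied by the difference quotients $\mathrm{D}_\gamma^h f$ and apply Corollary \ref{interior 1,p est II}, and you have simply spelled out this strategy in full (including the bootstrap, which mirrors the proof of Corollary \ref{interior 2,p est I}). One minor efficiency you could note: since Corollary \ref{interior 1,p est II} applied directly to $f$ (with $g=\mathrm{div}\,\Phi$, $\Phi\in L^p$) already yields $\mathrm{D}f\in L^p_{loc}$, the commutator term $(\mathrm{D}_\gamma^h A)\mathrm{D}f(\cdot+he_\gamma)$ is in $L^p_{loc}$ from the outset, so the bootstrap through $q=\min\{2^*,p\}$ is in fact unnecessary here --- but this does not affect correctness.
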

\begin{proof}
It is enough to derive the system satisfied by the discrete partial derivative $$\mathrm{D}_\gamma^h f:=\frac{1}{h}(f(\cdot+he_\gamma)-f(\cdot)),$$ for $|h|$ sufficiently small, and apply Corollary \ref{interior 1,p est II}.
\end{proof}

\subsection{Properties of convex functions}
In this second preliminary section, we prove some results concerning convex functions on $\R^N$. In what follows, we denote by $F_\xi$ and $F_{\xi\xi}$ respectively the gradient and the Hessian of a function $F:\R^N\to\R$.

\begin{Lemma}\label{convex lemma}
Let $F:\R^N\to\R$ be $C^1$ and convex, with $F(0)=0$. Then $$F(\xi)\leq F_\xi(\xi)\cdot\xi\quad\forall\xi\in\R^N.$$
\end{Lemma}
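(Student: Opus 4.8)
The plan is to reduce the multidimensional statement to a one-dimensional computation along the ray through $\xi$, and then exploit monotonicity of the derivative of a convex function of one variable.

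First I would fix $\xi\in\R^N$ and introduce the auxiliary function $g:[0,1]\to\R$ defined by $g(t):=F(t\xi)$. Since $F$ is $C^2$, so is $g$, with $g'(t)=F_\xi(t\xi)\cdot\xi$; moreover $g$ is convex on $[0,1]$ because it is the composition of the convex function $F$ with the affine map $t\mapsto t\xi$. Convexity of $g$ gives that $g'$ is nondecreasing on $[0,1]$, hence $g'(t)\leq g'(1)=F_\xi(\xi)\cdot\xi$ for every $t\in[0,1]$. Integrating this inequality over $[0,1]$ and using the fundamental theorem of calculus together with $g(0)=F(0)=0$ yields
$$
F(\xi)=g(1)=g(1)-g(0)=\int_0^1 g'(t)\,dt\leq \int_0^1 g'(1)\,dt=F_\xi(\xi)\cdot\xi,
$$
which is the claim. (Equivalently, one can invoke directly the first-order characterisation of convexity, $F(y)\geq F(x)+F_\xi(x)\cdot(y-x)$, with $x=\xi$ and $y=0$, and then substitute $F(0)=0$; the argument above is just a self-contained proof of that inequality in the only case needed.)

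There is no genuine obstacle here: the only point requiring a word of care is justifying that $g'$ is nondecreasing, which follows either from $g''\geq 0$ (a consequence of $F_{\xi\xi}\geq 0$, valid since $F$ is $C^2$ and convex) or directly from the definition of convexity of $g$ without differentiating twice. I would phrase it via the latter to keep the hypotheses minimal, although $F\in C^2$ makes the $g''\geq 0$ route equally immediate.
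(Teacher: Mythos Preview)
Your proof is correct and follows essentially the same approach as the paper: both restrict to the ray $\lambda\mapsto\lambda\xi$, use convexity along that ray, and integrate over $[0,1]$. The paper phrases the comparison as $\frac{d}{d\lambda}[F_\xi(\lambda\xi)\cdot(\lambda\xi)]\geq\frac{d}{d\lambda}[F(\lambda\xi)]$ via $F_{\xi\xi}\geq0$, whereas you use the equivalent monotonicity $g'(t)\leq g'(1)$; your formulation is slightly more direct, but the underlying idea is identical.
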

\begin{proof}
It is a direct consequence of \cite[Thm. 2.52(i)]{Dac}.

\end{proof}

We also need the next invertibility result. {}{Recall that $F:\R^N\to\R$ is strongly convex if $F-\frac{|\cdot|^2}{c}$ is convex, for some $c>0$. This is a stronger notion than strict convexity.}
\begin{Lemma}\label{invertibility}
Let $F:\R^N\to\R$ be $C^2$ and strongly convex, with $F\geq0$ and $F(0)=0$. Then the map $\Phi:\R^N\to\R^N$ defined by
{}{
\begin{equation}\nonumber 
\left\{
\begin{aligned}
&\Phi(\xi)=F(\xi)\frac{F_\xi(\xi)}{|F_\xi(\xi)|}\quad\forall\xi\in\R^N\setminus\{0\},\\
&\Phi(0)=0
\end{aligned}
\right.
\end{equation}
}
is a homeomorphism in $\R^N$.
\end{Lemma}
\begin{proof}
We divide the proof in two steps, showing first that $\Phi$ is a local diffeomorphism in $\R^N\setminus\{0\}$, and secondly that it is a global homeomorphism in $\R^N$.\\
{\textit{Step 1}}: We show that det$\Phi_\xi(\xi)>0$ away from the origin.\\
Let us compute the derivative of $\Phi$ at fixed $\xi\neq0$:
\begin{align*}
\Phi_\xi&=\frac{1}{|F_\xi|}\left(F_\xi\otimes F_\xi+FF_{\xi\xi}\left( I-\frac{F_\xi\otimes F_\xi}{|F_\xi|^2}\right)\right)\\
&=\frac{1}{|F_\xi|}\left(FF_{\xi\xi}+(|F_\xi|^2I-FF_{\xi\xi})\frac{F_\xi\otimes F_\xi}{|F_\xi|^2}\right),\quad\mbox{in }\R^N\setminus\{0\}.
\end{align*}
Notice that it is well defined and continuous in $\R^N\setminus\{0\}$, since our assumptions imply that both $F$ and $F_\xi$ vanish only at $\xi=0$.
Now we recall the following formula concerning the determinant of rank 1-perturbation of invertible matrices:
$$
\mathrm{det}(A+a\otimes b)=(\mathrm{det}A)(1+(A^{-1}a)\cdot b)\quad\forall A\in GL_N(\R)\quad\forall a,b\in\R^N.
$$
Applying it to $A=FF_{\xi\xi}$, $a=(|F_\xi|^2I-FF_{\xi\xi})\frac{F_\xi}{|F_\xi|}$, $b=\frac{F_\xi}{|F_\xi|}$, we get
\begin{align*}
\mathrm{det}\Phi_\xi&=\frac{1}{|F_\xi|^N}\mathrm{det}(FF_{\xi\xi})\left(1+(FF_{\xi\xi})^{-1}\left((|F_\xi|^2I-FF_{\xi\xi})\frac{F_\xi}{|F_\xi|}\right)\cdot\frac{F_\xi}{|F_\xi|}\right)\\
&=\frac{F^{N-1}}{|F_\xi|^N}\mathrm{det}(F_{\xi\xi})\left((F_{\xi\xi})^{-1}F_\xi\cdot F_\xi\right)\\
&>0\quad\mbox{in }\R^N\setminus\{0\},
\end{align*}
where we used that det$(F_{\xi\xi})>0$ and that $(F_{\xi\xi})^{-1}$ is positive definite, by {}{strong convexity} of $F$.\\

\noindent
{\textit {Step 2}}: We show that $\Phi$ is a global homeomorphism in $\R^N$.\\
Notice that $\Phi$ is continuous on $\R^N$ and $\Phi(0)=0$. Since $F$ is strictly convex, positive and null at $0$, we have that $\Phi^{-1}(\{0\})=\{0\}$, which implies $\Phi(\R^N\setminus\{0\})\subset\R^N\setminus\{0\}$.
 Let us prove that the restriction  $\Phi|_{\R^N\setminus\{0\}}:\R^N\setminus\{0\}\to\R^N\setminus\{0\}$ is surjective, by showing that $\Phi(\R^N\setminus\{0\})$ is open and closed in $\R^N\setminus\{0\}$.\\
Since $\Phi$ is a local diffeomorphism in $\R^N\setminus\{0\}$, it is immediate to prove that $\Phi(\R^N\setminus\{0\})$ is open. To show the closedness, consider a sequence ${(\eta_k)}_k\subset\Phi(\R^N\setminus\{0\})$ converging to some $\eta_\infty\in\R^N\setminus\{0\}$ and let ${(\xi_k)}_k\in\R^N\setminus\{0\}$ be such that $\Phi(\xi_k)=\eta_k$. Our assumptions on $F$ implies 
\begin{align}\label{phi to infty}
\lim_{|\xi|\to\infty}F(\xi)=\lim_{|\xi|\to\infty}|\Phi(\xi)|=\infty.
\end{align}
So, ${(\xi_k)}_k$ is necessarily bounded and there exists a subsequence ${(\xi_{k_j})}_j\subset{(\xi_k)}_k$ such that $\xi_{k_j}\to\xi_\infty$, for some $\xi_\infty\in\R^N$, as $j\to\infty$. Therefore, we conclude
\begin{align*}
\eta_\infty=\lim_{j\to\infty}\eta_{k_j}=\lim_{j\to\infty}\Phi(\xi_{k_j})=\Phi(\xi_\infty),
\end{align*}
that also ensures $\xi_\infty\neq0$.\\
Now we claim that any $\eta\in\R^N\setminus\{0\}$ has finitely many preimages. Indeed, if for the sake of contradiction we assume there exists a sequence of distinct points ${(\xi_k)}_k\subset\R^N$ such that $\Phi(\xi_k)=\eta$, then ${(\xi_k)}_k$ is bounded, from \eqref{phi to infty}. So, there exists a subsequence ${(\xi_{k_j})}_j\subset{(\xi_k)}_k$ such that $\xi_{k_j}\to\xi_\infty$, for some $\xi_\infty\in\R^N\setminus\{0\}$, as $j\to\infty$. Then any neighbourhood of $\xi_\infty$ contains infinitely many preimages of $\eta$, contradicting the local invertibility of $\Phi$ in $\R^N\setminus\{0\}$. Therefore, we have proved that $\Phi$ is a topological covering of $\R^N\setminus\{0\}$. For $N\geq3$, we infer that $\Phi$ is a homeomorphism in $\R^N\setminus\{0\}$, since it is a universal covering of a simply connected space (see \cite[Prop. 13.30]{Ma}). Finally, since $\Phi$ is continuous at $0$ and $\Phi^{-1}(0)=\{0\}$, it is not difficult to see that $\Phi$ extends to a global homeomorphism in $\R^N$, for any $N\geq3$.\\
It remains to discuss the case $N\leq 2$. The one dimensional case is immediate, so let us assume $N=2$. Extend $F$ to $\R^3$ by defining the function
$$
\tilde F(\xi_1,\xi_2,\xi_3)=F(\xi_1,\xi_2)+\xi_3^2.
$$
Then $\tilde F:\R^3\to\R$ is $C^2$ and {}{strongly convex}, with $\tilde F\geq0$ and $\tilde F(0)=0$. Thanks to the previuos argument, the map
$$
\tilde \Phi(\xi)=\tilde F(\xi)\frac{\tilde F_\xi(\xi)}{|\tilde F_\xi(\xi)|}\quad\forall\xi\in\R^3
$$
is a homeomorphism in $\R^3$. Since $\tilde \Phi|_{{\R^2}\times\{0\}}=\Phi$, we obtain the conclusion.
\end{proof}

\section{Proof of the main results}\label{sec:proofs}
{}{First, we prove Theorem \ref{existence thm}, concerning existence of minimisers. Our arguments follow similar lines to those in \cite{KM}, appropriately extended to the vectorial case and written in the formalism of Gamma convergence.
Then, we divide the proof of Theorem \ref{main thm} into two parts, about PDE derivation and uniqueness respectively. In the first one, we follow the strategy in \cite{KM}, which adapts to the vectorial setting up to some technical issues, while in the last part our arguments are significantly different.}

\subsection{Proof of Theorem \ref{existence thm}}\label{subs:existence}
Let $p\geq1$ and recall that $u_0\in\mathcal{W}^{2,\infty}(\Om,\R^N)$. We begin by considering the functional
$$
E_p(u):=\left(\fint_{\Om}F(x,\mathrm{L} u(x))^pdx\right)^{\frac{1}{p}},\quad u\in W_{u_0}^{2,2p}(\Om,\R^N).
$$
{}{
By Lemma \ref{convex lemma} and assumption \eqref{ass2}, we have $F(x,\xi)\leq c|\xi|^2$ for every $\xi\in\R^N$ and almost every $x\in\Om$. Therefore,
$$
E_p(u_0)\leq E_\infty(u_0)\leq c\,\|\mathrm Lu_0\|^2_\infty<\infty.
$$
In particular, this guarantees that $e_\infty<\infty$.}\\
Assume that $E_p(u)<\infty$ for some $u\in W_{u_0}^{2,2p}(\Om,\R^N)$. {}{From \eqref{ass1}, we infer that $\mathrm{L}u\in L^{2p}(\Om,\R^N)$}. By applying Corollary \ref{CZ} and \eqref{ass1}, we have 
$$
\|u\|_{2,2p}\leq C\left(\|\mathrm{L}u\|_{2p}+\|u_0\|_{2,2p}\right)\leq C\left(c^\frac12|\Om|^\frac{1}{2p}E_p(u)^\frac12+\|u_0\|_{2,2p}\right)
$$
for a constant $C=C(\Om,p,\lambda,A)$.
Thus, we deduce that $E_p$ is coercive in $W_{u_0}^{2,2p}(\Om,\R^N)$. Moreover, since $F$ is convex with respect to $\xi$, by Direct Methods we obtain the existence of a minimiser $u_p\in W_{u_0}^{2,2p}(\Om,\R^N)$ of $E_p$. \\
Fix $k\in\N$. Extend $E_p$ and $E_\infty$ to $W^{2,k}_{u_0}(\Om,\R^N)$ as
\begin{equation}\nonumber
E_p(u)=\left\{
\begin{aligned}
&\left(\fint_{\Om}F(x,\mathrm{L} u(x))^p\right)^{\frac{1}{p}}\!\!\!\!\!&,\quad&u\in W_{u_0}^{2,2p},\\
&+\infty&,\quad&u\in W_{u_0}^{2,k}\setminus W_{u_0}^{2,2p},
\end{aligned}
\right.\,\,\,
E_\infty(u)=\left\{
\begin{aligned}
&\|F(\cdot,\mathrm{L}u)\|_\infty \!\!\!\!\!&,\quad& u\in \mathcal{W}_{u_0}^{2,\infty},\\
&+\infty &,\quad&  u\in W_{u_0}^{2,k}\setminus \mathcal{W}_{u_0}^{2,\infty}.
\end{aligned}
\right.
\end{equation}
Let us show that $(E_p)_p$ is equi-coercive in $W^{2,k}_{u_0}$ with respect to $p$. Assume $E_p(u)\leq M$ for all $p\in[k/2,\infty)$. By \eqref{ass1} and the H\"older inequality, we have
\begin{align}\label{equi coerciv bound}
c^{-\frac12}|\Om|^{-\frac{1}{2k}}\|\mathrm{L}u\|_k\leq E_k(u)^\frac12\leq  E_{2p}(u)^\frac12\leq M^\frac12.
\end{align}
Hence, by Corollary \ref{CZ}
$$
\|u\|_{2,k}\leq C\left(\|\mathrm{L}u\|_k+\|u_0\|_{2,k}\right)\leq C\left(c^\frac12 |\Om|^\frac1k M^\frac12+\|u_0\|_{2,k}\right),
$$
for a constant $C=C(\Om,k,\lambda,A)$. This gives the equi-coercivity of $(E_p)_p$ in $W^{2,k}_{u_0}$. \\
Moreover, $(E_p)_p$ is a monotone increasing sequence of weakly lower semicontinuous functionals
 on $W^{2,k}_{u_0}$, thus its $\Gamma$-limit is given by (see \cite[Remark 2.12]{Br})
$$
\Gamma-\lim_p E_p=\lim_pE_p=E_\infty.
$$
By the Fundamental Theorem of Gamma Convergence \cite[Thm. 2.10]{Br}, if $(u_p)_p$ is a sequence of minimisers of $E_p$, up to passing to a subsequence, we have $u_p\rightharpoonup u_\infty$ weakly $W_{u_0}^{2,k}$ where $u_\infty$ is a minimiser of $E_\infty$ on $W_{u_0}^{2,k}$. By definition of $E_\infty$, we necessarily have $u_\infty\in\mathcal{W}^{2,\infty}_{u_0}(\Om,\R^N)$.
Moreover, by the same Theorem
\begin{align}\label{limit ep}
E_p(u_p)\to E_\infty({u_\infty}) \quad\mbox{as }p\to\infty.
\end{align}

\subsection{Proof of Theorem \ref{main thm}: {PDE derivation}}\label{subs: PDE derivation}

Now we work towards the proof of \eqref{PDE u infty}.
Let us begin by writing down the system of Euler-Lagrange equations satisfied by $u_p$, using the variational nature of the operator $L$ and the symmetry of $A$: 
\begin{align}\label{EL for Ep}
\mathrm{L} f_p=0,\quad \mbox{ in }\mathcal{D}'(\Om,\R^N),
\end{align}
where, for $e_p:=E_p(u_p)$, $f_p:\Om\to\R^N$ is defined by
\begin{align}\label{fp}
f_p:=e_p^{1-p}F(\cdot,\mathrm{L} u_p)^{p-1} F_\xi(\cdot,\mathrm{L} u_p).
\end{align}
Since the only interesting case is $e_\infty>0$, we may assume that $e_p\geq\alpha>0$ (for $p$ large enough). In fact, if $e_\infty=0$, then $F(\cdot,\mathrm{L}u_\infty)=0$ and system \eqref{PDE u infty} reduces to $\mathrm{L}u_\infty=0$.
For this reason, $f_p$ is a well defined measurable map.\\
Let us prove that that $(f_p)_p$ is bounded in $L^1(\Om,\R^N)$:
\begin{equation}\label{fp bdd}
\begin{aligned}
\fint_\Om|f_p(x)|dx&=e_p^{1-p}\fint_\Om F(x,\mathrm{L} u_p(x))^{p-1}|F_\xi(x,\mathrm{L} u_p(x))|dx\\
&\leq ce_p^{1-p}\fint_\Om F(x,\mathrm{L} u_p(x))^{p-1}|\mathrm{L}u_p(x)|dx\\
&\leq ce_p^{1-p}\left(\fint_\Om F(x,\mathrm{L} u_p(x))^pdx\right)^\frac{p-1}{p}\left(\fint_\Om |\mathrm{L} u_p(x)|^pdx\right)^\frac{1}{p}\\
&=c\left(\fint_\Om |\mathrm{L} u_p(x)|^pdx\right)^\frac{1}{p}\\
&\leq c\left(\fint_\Om[c F(x,\mathrm{L} u_p(x))]^\frac{p}{2}dx\right)^\frac{1}{p}\\
&\leq c{c}^\frac{1}{2}\left(\fint_\Om F(x,\mathrm{L} u_p(x))^pdx\right)^\frac{1}{2p}=c^\frac{3}{2}{e_p}^\frac{1}{2}\leq c^\frac{3}{2}{e_\infty}^\frac{1}{2}.
\end{aligned}
\end{equation}
In particular, thanks to Proposition \ref{distrib to weak}, $f_p\in W^{1,2}_{loc}(\Om,\R^N)\cap L^1(\Om,\R^N)$ is a weak solution of \eqref{EL for Ep}. Thus, by Corollary \ref{interior 2,p est II}, for every $q<\infty$,
$$
\|f_p\|_{2,q,U}\leq C\|f_p\|_1\leq C,\quad\forall U\Subset\Om,
$$
for some constant $C=C(q,U,\Om,c,e_\infty)>0$. Hence, by Morrey's theorem, we deduce that ${(f_p)}_p$ is bounded in the local topology of $C^1$, and so there exists $f_\infty$ and a subsequence ${(f_{p_\ell})}_\ell$ such that $f_{p_\ell}\to f_\infty$ locally uniformly as $\ell\to\infty$. In particular, $\mathrm{L}f_\infty=0$ in the sense of distributions and $f_\infty\in L^1(\Om,\R^N)$ as well. \\
The next lemma is central in our argument.
\begin{Lemma}{\bf (Non-triviality of $f_\infty$)}\label{opt lemma}
The map $f_\infty$ constructed above is such that $f_\infty\not\equiv0$.
\end{Lemma}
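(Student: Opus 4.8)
The natural route is a proof by contradiction: suppose $f_\infty\equiv0$, so that $f_{p_\ell}\to0$ locally uniformly on $\Om$ (recall also $e_{p_\ell}\to e_\infty>0$ by \eqref{limit ep}, and $u_{p_\ell}\to u_\infty$ in $C^1(\overline\Om)$ by the compact embedding $W^{2,k}\hookrightarrow C^{1,\alpha}$). The starting point is that Lemma~\ref{convex lemma} combined with \eqref{ass1}--\eqref{ass2} yields the pointwise bound $|F_\xi(x,\xi)|\ge c^{-1/2}F(x,\xi)^{1/2}$ (since $F(x,\xi)\le F_\xi(x,\xi)\cdot\xi\le|F_\xi(x,\xi)|\,(cF(x,\xi))^{1/2}$), so that by \eqref{fp}
\begin{align*}
|f_p(x)|\ \ge\ c^{-1/2}e_p^{1/2}\Big(\tfrac{F(x,\mathrm Lu_p(x))}{e_p}\Big)^{p-\frac12}\qquad\text{a.e.\ in }\Om .
\end{align*}
Hence $f_{p_\ell}\to0$ locally uniformly forces $\big(F(\cdot,\mathrm Lu_{p_\ell})/e_{p_\ell}\big)^{p_\ell}\to0$ uniformly on every $U\Subset\Om$; in particular $F(\cdot,\mathrm Lu_{p_\ell})\le(1+o(1))e_{p_\ell}$ and $|\mathrm Lu_{p_\ell}|\le(1+o(1))(ce_\infty)^{1/2}$ a.e.\ on $U$.

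Next I would localise the Euler--Lagrange equation \eqref{EL for Ep}. For $U\Subset\Om$ fix $\zeta\in C^\infty_c(\Om)$ with $0\le\zeta\le1$ and $\zeta\equiv1$ on $U$; then $\zeta(u_p-u_0)\in\mathcal W^{2,2p}_0(\Om,\R^N)$ is an admissible variation and
\begin{align*}
0=\int_\Om f_p\cdot\mathrm L\big(\zeta(u_p-u_0)\big)\,dx=\int_\Om\zeta\,f_p\cdot\mathrm L(u_p-u_0)\,dx+\int_\Om f_p\cdot\mathcal E_p\,dx ,
\end{align*}
where $\mathcal E_p$ collects the terms carrying a derivative of $\zeta$: it is supported in $\operatorname{supp}\mathrm D\zeta\Subset\Om$ and bounded there uniformly in $p$, because $(u_p)$ is bounded in $C^1_{\mathrm{loc}}$ by Corollary~\ref{interior 2,p est II} and Morrey's embedding. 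Since $f_p\to0$ uniformly on $\operatorname{supp}\zeta$, both $\int_\Om f_p\cdot\mathcal E_p$ and $\int_\Om\zeta\,f_p\cdot\mathrm Lu_0$ tend to $0$, hence $\int_\Om\zeta\,f_p\cdot\mathrm Lu_p\to0$; but Lemma~\ref{convex lemma} and $\zeta\ge0$ give $\zeta\,f_p\cdot\mathrm Lu_p\ge e_p^{1-p}\zeta\,F(\cdot,\mathrm Lu_p)^p\ge0$, so $\int_U\big(F(\cdot,\mathrm Lu_p)/e_p\big)^p\,dx\to0$ for every $U\Subset\Om$. As $\int_\Om\big(F(\cdot,\mathrm Lu_p)/e_p\big)^p\,dx=|\Om|$ by the definition of $e_p$, this means that the \emph{entire} $L^p$-mass of $F(\cdot,\mathrm Lu_{p_\ell})$ escapes into $\{x\in\Om:\operatorname{dist}(x,\partial\Om)<\delta\}$, for every $\delta>0$.

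It remains to show that this cannot happen, and this is the decisive step and the place where I expect the main difficulty. Such concentration is \emph{not} excluded by $\mathrm Lf_p=0$ and the $L^1$-bound \eqref{fp bdd} alone — an $L^1$-bounded sequence of $\mathrm L$-harmonic maps that vanishes locally uniformly may lose all its mass to $\partial\Om$ (a scalar model being suitably normalised harmonic polynomials $\operatorname{Re}(z^k)$ on a disc) — so the minimality of $u_p$ must be brought in. The plan is to turn the mass concentration into a strictly better competitor: using that admissible competitors are prescribed only to first order on $\partial\Om$, that $E_p$ is convex, and that $F(\cdot,\mathrm Lu_\infty)\le e_\infty$ a.e.\ (since $E_\infty(u_\infty)=e_\infty$), one replaces $u_p$ on a boundary layer $\{\operatorname{dist}(\cdot,\partial\Om)<\delta_p\}$, with $\delta_p\to0$, by a gluing with $u_\infty$; by the localisation step the interior part of the energy is negligible, while on the layer the glued map has $F\le(1+o(1))e_\infty$, so that — provided $\delta_p\to0$ fast enough for the layer volume to decay quickly in $p$ — the competitor has $E_p$-energy strictly below $e_p=E_p(u_p)$, a contradiction. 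The genuinely delicate part, and where I expect the real work, is the quantitative control near $\partial\Om$: estimating $u_p$, $f_p$ and the cut-off errors $\mathcal E_p$ on shells whose distance to $\partial\Om$ tends to $0$ with $p$ (again via Corollary~\ref{interior 2,p est II}), and balancing the speed of $\delta_p\to0$ against those estimates and against the rate in the localisation step. Granting this, the contradiction is reached, and therefore $f_\infty\not\equiv0$.
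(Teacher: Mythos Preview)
Your argument is correct up to the point where you establish that, under the assumption $f_\infty\equiv0$, all of the $L^p$-mass $\int_\Om(F(\cdot,\mathrm Lu_p)/e_p)^p=|\Om|$ must escape to the boundary. But from there on the proof is only a sketch: you propose a boundary-layer competitor construction and explicitly flag the quantitative control near $\partial\Om$ as ``the genuinely delicate part, and where I expect the real work''. That step is not carried out, and without it the argument is incomplete --- as you yourself note, $L^1$-bounded sequences of $\mathrm L$-harmonic maps can perfectly well concentrate at the boundary, so nothing you have written so far yields a contradiction.

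The paper avoids this boundary-layer analysis entirely by two clean devices. First, it shows directly that the measures $f_p\mathscr L^n\mres\Om$ converge weakly-$*$ to $f_\infty\mathscr L^n\mres\Om$ in $\mathcal M(\overline\Om,\R^N)$, i.e.\ that the limiting measure has \emph{no} boundary part: one tests $\mathrm L(f_p-f_\infty)=0$ with $\psi=d^2g\in W^{2,\infty}_0(\Om,\R^N)$, where $d=\operatorname{dist}(\cdot,\partial\Om)$ and $g\in C^2(\overline\Om,\R^N)$; on $\partial\Om$ one has $\mathrm D^2\psi=2g\otimes\nu\otimes\nu$, so passing to the limit and using ellipticity ($A:\nu\otimes\nu$ is positive definite) kills $\mu\mres\partial\Om$. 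Second, it uses the Whitney extension $u_*\in C^2(\R^n,\R^N)$ of $u_0|_{\partial\Om}$: testing the Euler--Lagrange equation with $u_p-u_*\in W^{2,2p}_0$ gives $\int_\Om f_p\cdot\mathrm Lu_p=\int_\Om f_p\cdot\mathrm Lu_*$, and now the integrand on the right is tested against the \emph{continuous} function $\mathrm Lu_*\in C(\overline\Om,\R^N)$, so one may pass to the weak-$*$ limit in measures and conclude $\int_\Om f_\infty\cdot\mathrm Lu_*\ge|\Om|\alpha>0$. This is precisely where the hypothesis that $u_0|_{\partial\Om}$ be $C^2$ in the Whitney sense enters; your localisation with $\zeta(u_p-u_0)$ uses only $u_0\in\mathcal W^{2,\infty}$ and therefore cannot reach the boundary.
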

\begin{proof}
First we show that $f_p\stackrel{*}{\rightharpoonup}f_\infty$ in $\mathcal{M}(\overline\Om,\R^N)$, as $p\to\infty$ along a subsequence.\\
 By approximating with $C_c^\infty$-maps, thanks to the regularity assumptions on $A$, for every $ p\in(1,\infty)$ we have 
\begin{align}\label{strong A-harm}
\mathrm{L}f_p=\mathrm{L}f_\infty=0 \quad\mbox{ in }\left(W^{2,\infty}_0(\Om,\R^N)\right)^*.
\end{align}
From the boundedness in $L^1$, there exists a measure $\mu\in\mathcal{M}(\overline\Om,\R^N)$ such that $f_p\mathscr{L}^n\mres\Om\stackrel{*}{\rightharpoonup}\mu$ in $\mathcal{M}(\overline\Om,\R^N)$. Since $f_p\rightharpoonup f_\infty$ in $W^{2,q}_{loc}(\Om,\R^N)$, we necessarily have $\mu\mres\Om=f_\infty\mathscr{L}^n\mres\Om$. So we can write $$\mu=f_\infty\mathscr{L}^n\mres\Om+\mu\mres\partial\Om.$$
Now we claim that $\mu\mres\partial\Om=0$.
Let us denote $d:=\mathrm{dist}(\cdot,\partial\Om)$ and $\Om_r:=\{x\in\Om:d(x)\leq r\}$. Then, by the regularity of $\partial\Om$, we have $d\in C^2(\overline\Om_r)$ for some $r>0$ sufficiently small. Extend it to a function $d\in C^2(\overline\Om)$ and fix a map $g\in C^2(\overline\Om,\R^N)$. Set $\psi:=d^2g\in  C^2(\overline\Om,\R^N)\cap W^{2,\infty}_0(\Om,\R^N)$ and test \eqref{strong A-harm} with $\psi$:
\begin{align}\label{test psi}
0=\int_\Om(f_p-f_\infty)\cdot \mathrm{L}\psi =\int_\Om(f_p-f_\infty)\cdot[(\mathrm{div}A) \mathrm{D}\psi+A\mathrm{D}^2\psi].
\end{align}
By a computation,
\begin{align*}
&\mathrm{D}\psi=2d\,g\otimes\mathrm{D}d+d^2\mathrm{D}g ,\\
&\mathrm{D}^2\psi=2g\otimes\mathrm{D}d\otimes \mathrm{D}d+2d\, g\otimes\mathrm{D}^2d+4d\mathrm{D}g\otimes \mathrm{D}d+d^2\mathrm{D}^2g.
\end{align*}
Thus,
$$
\mathrm{D}^2\psi\mres\partial\Om=2(g\otimes\mathrm{D}d\otimes \mathrm{D}d)\mres\partial\Om=2\, g\otimes\nu \otimes\nu\mres\partial\Om,
$$
where $\nu$ is the outwards unit normal to the boundary.
Since $(\mathrm{div}A) \mathrm{D}\psi\in C_0(\Om,\R^N)$ and \\$(f_p-f_\infty)\mathscr{L}^n\mres\Om\stackrel{*}{\rightharpoonup}\mu\mres\partial\Om$ in $\mathcal{M}(\overline\Om,\R^N)$, we can pass to the limit in \eqref{test psi} to obtain
$$
0=2\int_{\partial\Om}A:g \otimes\nu \otimes\nu\cdot d\mu.
$$
By the ellipticity of $A$, the matrix $M:=A:\nu\otimes\nu$ is positive definite, so we conclude that
$$
\int_{\partial\Om}Mg\cdot d\mu=0,\quad\forall g\in C^2(\overline\Om,\R^N),
$$
which gives $\mu\mres\partial\Om=0$.\\
By Lemma \ref{convex lemma}, we estimate
\begin{align*}
\int_\Om f_p\cdot \mathrm{L} u_p&=e_p^{1-p}\int_\Om F(x,\mathrm{L} u_p(x))^{p-1}F_\xi(x,\mathrm{L} u_p(x))\cdot \mathrm{L} u_p(x)dx\\
&\geq e_p^{1-p}\int_\Om F(x,\mathrm{L} u_p(x))^pdx\\
&=e_p^{1-p}e_p^p|\Om|={|\Om|}e_p\geq|\Om|\alpha,
\end{align*}
where in the last step, we recall we are under the assumption $e_\infty>0$.\\
Thanks to Whitney's extension theorem, there exists $u_*\in C^2(\R^n,\R^N)$ such that $u_*=u_0$ and $\mathrm{D}u_*=\mathrm{D}u_0$ on $\partial\Om$. Now we observe that \eqref{EL for Ep} clearly holds also in $\left(W^{2,2p}_0(\Om,\R^N)\right)^*$, therefore by testing with $u_p-u_*\in W^{2,2p}_0(\Om,\R^N)$, we obtain
$$
|\Om|\alpha\leq\int_\Om f_p\cdot \mathrm{L}u_p=\int_\Om f_p\cdot \mathrm{L}u_*.
$$ 
So, we can pass to the limit on the right hand side, observing that $\mathrm{L}u_*\in C(\overline\Om,\R^N)$, and conclude
$$
\int_\Om f_\infty\cdot \mathrm{L}u_*\geq|\Om|\alpha.
$$
In particular, this ensures that $f_\infty\not\equiv0$.
\end{proof}
Thanks to Lemma \ref{opt lemma}, we are now able to derive the PDE satisfied by $u_\infty$. \\
Set $\Gamma_\infty:=f_\infty^{-1}(0)\subsetneq\Om$. From \eqref{fp}, we obtain the equation
\begin{align}\label{inverting expr 1}
|f_p(x)|^\frac{2}{2p-1}\frac{f_p(x)}{|f_p(x)|}=e_p^{-1+\frac{1}{2p-1}}F(x,\mathrm{L} u_p(x))\left(\frac{|F_{\xi}(x,\mathrm{L} u_p(x))|}{\sqrt{F}(x,\mathrm{L} u_p(x))}\right)^\frac{2}{2p-1}\frac{F_\xi(x,\mathrm{L} u_p(x))}{|F_\xi(x,\mathrm{L} u_p(x))|},\qquad\mbox{a.e. }x\in \Om,
\end{align}
which we rewrite as
\begin{align}\label{inverting expr 2}
e_p^{1-\frac{1}{2p-1}}\left(\frac{|F_{\xi}(x,\mathrm{L} u_p(x))|}{\sqrt{F}(x,\mathrm{L} u_p(x))}\right)^{-\frac{2}{2p-1}}|f_p(x)|^\frac{2}{2p-1}\frac{f_p(x)}{|f_p(x)|}=\Phi(x,\mathrm{L} u_p(x)),\qquad\mbox{a.e. }x\in \Om,
\end{align}
where we have symbolised
$$\Phi(x,\xi):=F(x,\xi)\frac{F_\xi(x,\xi)}{|F_\xi(x,\xi)|},\qquad\mbox{ a.e. }  x\in\Om,\quad\forall\xi\in\R^N.$$
Notice that equations \eqref{inverting expr 1} and \eqref{inverting expr 2} are well defined. Indeed, from \eqref{ass1},\eqref{ass2}, and Lemma \ref{convex lemma}, it is easy to see that $|F_\xi|/\sqrt{F}$ is bounded from above and below. Moreover, by the assumptions on $F$, we have
$f_p=0$ on a set of positive measure $E\subset \Om$ if and only if $\mathrm Lu_p=0$ on the same set. So, points in $E$ are not singular for both sides of \eqref{inverting expr 1} and \eqref{inverting expr 2}, naturally vanishing on $E$.\\
{}{Thanks to the strong convexity assumption \eqref{ass0},} we can apply Lemma \ref{invertibility}, obtaining that for almost every $x\in\Om$, the map $\Phi_x:=\Phi(x,\cdot):\R^N\to\R^N$ is a homeomorphism. So, applying the inverse map $\Phi_x^{-1}$, we rewrite the previous equation as
$$
\mathrm{L} u_p(x)=\Phi_x^{-1}\left(e_p^{1-\frac{1}{2p-1}}\left(\frac{|F_{\xi}(x,\mathrm{L} u_p(x))|}{\sqrt{F}(x,\mathrm{L} u_p(x))}\right)^{-\frac{2}{2p-1}}|f_p(x)|^\frac{2}{2p-1}\frac{f_p(x)}{|f_p(x)|}\right),\qquad\mbox{a.e. }x\in \Om.
$$
 Recall that $f_{p_\ell}\to f_\infty$ uniformly on every compact $K\subset\Om\setminus\Gamma_\infty$ and, by \eqref{limit ep}, $e_{p_\ell}\to e_\infty$ as $\ell\to\infty$. So we can pass to the limit locally uniformly in the right hand side along the subsequence ${(p_\ell)}_\ell$. On the other hand, since we already know that $\mathrm{L}u_p\rightharpoonup \mathrm{L}u_\infty$ weakly in $L^q$ for every $q\geq1$, we obtain
$$
\mathrm{L}u_\infty(x)=\Phi_x^{-1}\left(e_\infty\frac{f_\infty(x)}{|f_\infty(x)|}\right),\qquad\mbox{a.e. }x\in K.
$$
By inverting again, we get
$$
\Phi(x,\mathrm Lu_\infty)=F(x,\mathrm Lu_\infty)\frac{F_\xi(x,\mathrm Lu_\infty)}{|F_\xi(x,\mathrm Lu_\infty)|}=e_\infty\frac{f_\infty(x)}{|f_\infty(x)|},\qquad\mbox{a.e. }x\in K.
$$
Since $\mathrm{L}f_\infty=0$, the unique continuation property of L yields $|\Gamma_\infty|=0$. As a consequence, we obtain \eqref{PDE u infty}.

\subsection{Proof of Theorem \ref{main thm}: {Uniqueness}}\label{subs: uniqueness}
In the previous sections, we proved the existence of a minimiser $u_\infty\in\mathcal{W}^{2,\infty}_{u_0}(\Om,\R^N)$ of $E_\infty$ and that it satisfies the PDE system \eqref{PDE u infty}. As a first step to provide uniqueness of minimisers of $E_\infty$ in $\mathcal{W}^{2,\infty}_{u_0}(\Om,\R^N)$, we show that every minimiser of $E_\infty$ satisfies a PDE system of the same structure as \eqref{PDE u infty}. Some of the arguments are similar to Section \ref{subs:existence}, but we will present the complete proof for sake of clarity. More precisely, we start with proving the following result.
\begin{Lemma}{\bf (Necessity of the PDE system)}\label{necessity of PDE}
Let $F, A$, and $u_0$ be as in Theorem \ref{main thm}. Suppose that $u\in\mathcal{W}^{2,\infty}_{u_0}(\Om,\R^N)$ is a minimiser of $E_\infty$. Then there exists $f\in L^1(\Om,\R^N)$ such that $f\in W^{2,q}_{loc}(\Om,\R^N)$ for all $q<\infty$ and
\begin{equation}\label{PDE u}
\left\{
\begin{aligned}
&F(x,\mathrm{L} u(x))\frac{F_\xi(x,\mathrm{L} u(x))}{|F_\xi(x,\mathrm{L} u(x))|}=e_\infty\frac{f(x)}{|f(x)|},\quad \mbox{a.e. } x\in\Om,\\
&\mathrm{L} f(x)=0, \quad \mbox{a.e. } x\in\Om.
\end{aligned}
\right.
\end{equation}
\end{Lemma}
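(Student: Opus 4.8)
The plan is to run the $L^p$-approximation scheme of Sections~\ref{subs:existence}--\ref{subs: PDE derivation} again, but on a \emph{penalised} family of functionals chosen so that the approximating minimisers are forced to converge to the given minimiser $u$ rather than to some other minimiser of $E_\infty$ (this is the technique alluded to in the introduction). Concretely, for $p\geq1$ I would set
$$
\mathcal{G}_p(v):=E_p(v)+\int_\Om|\mathrm{L}(v-u)|^2\,dx,\qquad v\in W^{2,2p}_{u_0}(\Om,\R^N),
$$
with $E_p$ as in Section~\ref{subs:existence}. Exactly as there, \eqref{ass1} and Corollary~\ref{CZ} make $E_p$ (hence $\mathcal{G}_p$) coercive on $W^{2,2p}_{u_0}$, while convexity of $F(x,\cdot)$ and of $\xi\mapsto|\xi|^2$ give weak lower semicontinuity, so Direct Methods produce a minimiser $v_p$ of $\mathcal{G}_p$. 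The crucial point is the chain $E_p(u_p)\leq E_p(v_p)\leq\mathcal{G}_p(v_p)\leq\mathcal{G}_p(u)=E_p(u)$, where $u_p$ is the unpenalised $E_p$-minimiser: since $u$ minimises $E_\infty$ one has $E_p(u)\to\|F(\cdot,\mathrm{L}u)\|_{L^\infty}=e_\infty$, and by \eqref{limit ep} also $E_p(u_p)\to e_\infty$, whence $E_p(v_p)\to e_\infty$ and
$$
\int_\Om|\mathrm{L}(v_p-u)|^2\,dx=\mathcal{G}_p(v_p)-E_p(v_p)\leq E_p(u)-E_p(u_p)\longrightarrow0.
$$
Thus $\mathrm{L}v_p\to\mathrm{L}u$ in $L^2$, and combined with the uniform bound $\|\mathrm{L}v_p\|_q\leq C(q)$ coming from \eqref{ass1} (as in \eqref{equi coerciv bound}), an interpolation gives $\mathrm{L}v_p\to\mathrm{L}u$ in $L^q(\Om,\R^N)$ for every $q<\infty$.

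Next I would write the Euler--Lagrange equation of $\mathcal{G}_p$ at $v_p$. Testing with $h\in W^{2,2p}_0$ and using that $\mathrm{L}$ is in divergence form with $A$ symmetric, one gets
$$
\int_\Om(f_p-g_p)\cdot\mathrm{L}h\,dx=0\qquad\forall h\in W^{2,2p}_0(\Om,\R^N),
$$
where $f_p:=E_p(v_p)^{1-p}F(\cdot,\mathrm{L}v_p)^{p-1}F_\xi(\cdot,\mathrm{L}v_p)$ (as in \eqref{fp}, with $e_p$ replaced by $E_p(v_p)$) and $g_p:=-2|\Om|\,\mathrm{L}(v_p-u)$; in particular $\mathrm{L}(f_p-g_p)=0$ in $\mathcal{D}'(\Om,\R^N)$, and by the previous paragraph $g_p\to0$ in $L^q$ for every $q<\infty$. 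The computation \eqref{fp bdd} carries over verbatim, so $(f_p)_p$ — hence $(f_p-g_p)_p$ — is bounded in $L^1(\Om,\R^N)$. Proposition~\ref{distrib to weak} then upgrades $f_p-g_p$ to a weak solution in $W^{1,2}_{loc}$, and Corollary~\ref{interior 2,p est II} (applied with right-hand side $0$) bounds $(f_p-g_p)_p$ in $W^{2,q}_{loc}$ for every $q<\infty$. By Morrey and a diagonal argument, along a subsequence $f_p-g_p\to f$ in $C^1_{loc}$ and weakly in $W^{2,q}_{loc}$, with $f\in W^{2,q}_{loc}(\Om,\R^N)$ and $\mathrm{L}f=0$; since $g_p\to0$ in $L^q_{loc}$ we also get $f_p\to f$ in $L^q_{loc}$ (in particular a.e.\ along a further subsequence), and the $L^1$-bound yields $f\in L^1(\Om,\R^N)$ via a weak-$\ast$ limit $f_p\mathscr{L}^n\mres\Om\stackrel{*}{\rightharpoonup}\mu$ in $\mathcal{M}(\overline\Om,\R^N)$ with $\mu\mres\Om=f\mathscr{L}^n\mres\Om$, exactly as in Lemma~\ref{opt lemma}.

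The non-triviality $f\not\equiv0$ is then obtained by adapting Lemma~\ref{opt lemma}, the only new feature being to carry the remainder $g_p$. First, $\mu\mres\partial\Om=0$: testing the identity $\mathrm{L}(f_p-g_p)=0$, extended to $(W^{2,\infty}_0)^*$ by approximation, against $\psi=d^2g$ with $d=\mathrm{dist}(\cdot,\partial\Om)\in C^2(\overline\Om)$ and $g\in C^2(\overline\Om,\R^N)$, and noting that $\int_\Om g_p\cdot\mathrm{L}\psi\to0$ (since $g_p\to0$ in $L^2$ and $\mathrm{L}\psi\in L^\infty$) and $\mathrm{L}f=0$ in $(W^{2,\infty}_0)^*$, the limit passage of Lemma~\ref{opt lemma} gives $\int_{\partial\Om}(A:\nu\otimes\nu)g\cdot d\mu=0$ and hence $\mu\mres\partial\Om=0$. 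Second, by Lemma~\ref{convex lemma} applied pointwise, $\int_\Om f_p\cdot\mathrm{L}v_p\geq|\Om|E_p(v_p)\geq|\Om|\alpha>0$ for $p$ large; testing the Euler--Lagrange equation with $v_p-u_*\in W^{2,2p}_0$, where $u_*\in C^2(\R^n,\R^N)$ is a Whitney extension with $u_*=u_0$, $\mathrm{D}u_*=\mathrm{D}u_0$ on $\partial\Om$, gives $\int_\Om f_p\cdot\mathrm{L}v_p=\int_\Om f_p\cdot\mathrm{L}u_*+\int_\Om g_p\cdot\mathrm{L}(v_p-u_*)$, whose last term vanishes in the limit ($g_p\to0$ in $L^2$, $\mathrm{L}(v_p-u_*)$ bounded in $L^2$) while the first converges to $\int_\Om f\cdot\mathrm{L}u_*$ (as $\mathrm{L}u_*\in C(\overline\Om)$ and $\mu$ has no boundary part); hence $\int_\Om f\cdot\mathrm{L}u_*\geq|\Om|\alpha>0$, so $f\not\equiv0$. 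With $f\not\equiv0$ and $\mathrm{L}f=0$ established, the PDE derivation goes through as in Section~\ref{subs: PDE derivation}: rewriting \eqref{fp} in the form \eqref{inverting expr 2} with $v_p$ in place of $u_p$, applying the inverse of the homeomorphism $\Phi_x$ of Lemma~\ref{invertibility}, and letting $p\to\infty$ along the subsequence — using $f_p\to f$ a.e.\ off $\Gamma:=f^{-1}(0)$, $E_p(v_p)\to e_\infty$, and $\mathrm{L}v_p\to\mathrm{L}u$ in every $L^q$ — yields $\Phi(x,\mathrm{L}u(x))=e_\infty f(x)/|f(x)|$ a.e.\ on $\Om\setminus\Gamma$, and the unique continuation property forces $|\Gamma|=0$, giving \eqref{PDE u}.

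The main obstacle I anticipate is the bookkeeping around the penalisation remainder $g_p$: the penalty must be simultaneously (i) innocuous in the limit, i.e.\ $g_p$ must vanish in topologies strong enough that the compactness step (Proposition~\ref{distrib to weak}, Corollary~\ref{interior 2,p est II}) and the boundary-mass and non-triviality arguments of Lemma~\ref{opt lemma} survive, and (ii) strong enough to pin $v_p$ to $u$. Both rest on the inequality $\int_\Om|\mathrm{L}(v_p-u)|^2\leq E_p(u)-E_p(u_p)\to0$, which is precisely where the hypothesis that $u$ is a \emph{minimiser} of $E_\infty$ enters (so that $E_p(u)$ and $E_p(u_p)$ share the limit $e_\infty$); getting the convergence of $g_p$ exactly right so that $f_p$ itself — and not merely $f_p-g_p$ — inherits the local $C^1$ and weak-$\ast$ convergences needed to invert $\Phi_x$ in the limit is the delicate point. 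The remaining steps are, as claimed, formally parallel to Sections~\ref{subs:existence}--\ref{subs: PDE derivation}.
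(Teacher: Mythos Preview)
Your proposal is correct and follows the same penalised $L^p$-approximation strategy as the paper, but with a different penalty: the paper uses the zeroth-order term $\tfrac12\fint_\Om|v-u|^2$, whereas you penalise at second order with $\int_\Om|\mathrm{L}(v-u)|^2$. Both choices work. With the paper's penalty the Euler--Lagrange equation reads $\mathrm{L}g_p = u - v_p$, so the remainder enters only as a bounded $L^q$ right-hand side and Corollary~\ref{interior 2,p est II} gives $W^{2,q}_{loc}$-compactness (hence local $C^1$-convergence) of the main term $g_p$ \emph{itself}; the bookkeeping is then identical to Section~\ref{subs: PDE derivation} with no residual term to carry. Your penalty buys the stronger information $\mathrm{L}v_p\to\mathrm{L}u$ in every $L^q$ directly from the energy gap, which makes the final limit passage in the inverted equation slightly cleaner, but forces you to split $f_p=(f_p-g_p)+g_p$ and obtain only $L^q_{loc}$/a.e.\ convergence of $f_p$ (rather than local uniform convergence); as you note, this still suffices to invert $\Phi_x$ and identify the limit. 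The non-triviality and boundary-mass arguments go through in either version, the only extra ingredient in yours being the observation that the $g_p$-contributions vanish in the limit, which you handle correctly.
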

\begin{proof}
Let $u\in\mathcal{W}^{2,\infty}_{u_0}(\Om,\R^N)$ be a minimiser of $E_\infty$. For any $p\geq1$, we consider the auxiliary functional
$$
A_p(v):=E_p(v)+\frac{1}{2}\fint_\Om|v-u|^2,\,\qquad v\in W^{2,2p}_{u_0}(\Om,\R^N).
$$
Using the Direct Method, it is not difficult to see that $A_p$ attains a minimum point. Let $v_p\in W^{2,2p}_{u_0}(\Om,\R^N)$ be a minimiser of $A_p$, then, recalling that $u_p$ is a minimiser of $E_p$ in the same space, we have
$$
e_p=E_p(u_p)\leq E_p(v_p)\leq A_p(v_p)\leq A_p(u)=E_p(u)\leq E_\infty(u)=e_\infty.
$$
Since $e_p\to e_\infty$, we have $A_p(v_p)\to e_\infty$ as $p\to\infty$. Now, if $(v_p)_p$ is a sequence of minimisers of $A_p$, by Calderon-Zygmund estimates, we can prove as in Section \ref{subs:existence} that $(v_p)_p$ bounded in $W^{2,q}_{u_0}(\Om,\R^N)$ for any fixed $q\geq1$. So, $v_{p_\ell}\rightharpoonup v_\infty$ weakly $ W^{2,q}_{u_0}$ for a subsequence $(p_\ell)_\ell$ and some $v_\infty\in W^{2,q}_{u_0}(\Om,\R^N)$.\\
We claim that $v_\infty=u$. Indeed, passing to the limit as $\ell\to\infty$ in the following inequality
$$
e_{p_\ell}+\frac{1}{2}\fint_\Om|v_{p_\ell}-u|^2\,\leq A_{p_\ell}(v_{p_\ell}),
$$
we get
$$
e_\infty+\frac{1}{2}\fint_\Om|v_\infty-u|^2\,\leq e_\infty,
$$
which necessarily implies $v_\infty=u$.\\
Now, for simplicity let us denote $v_p=v_{p_\ell}$ and write down the Euler-Lagrange equation satisfied by $v_p$:
\begin{align}\label{EL for vp}
\mathrm{L} g_p+v_p-u=0\qquad\mbox{in }\mathcal{D}'(\Om,\R^N),
\end{align}
 where, for $a_p:=E_p(v_p)$, $g_p$ is defined by $g_p:=a_p^{1-p}F(\cdot,\mathrm{L} v_p)^{p-1}F_\xi(\cdot,\mathrm{L} v_p)$. We can prove that $g_p$ is well defined and bounded in $L^1$ as showed for $f_p$ in {}{Section \ref{subs: PDE derivation}}.
In particular, thanks to Proposition \ref{distrib to weak}, $g_p\in W^{1,2}_{loc}(\Om,\R^N)\cap L^1(\Om,\R^N)$ is a weak solution of \eqref{EL for vp}. Thus, by Corollary \ref{interior 2,p est II}, for every $q<\infty$,
$$
\|g_p\|_{2,q,U}\leq C\left(\|g_p\|_1+\|v_p-u\|_{q}\right)\leq C,\quad\forall U\Subset\Om,
$$
for some constant $C=C(q,U,\Om,c,e_\infty)>0$.
By Sobolev embeddings, up to passing to a (not relabeled) subsequence, $g_p\to g_\infty$ locally uniformly as $p\to\infty$, for some $g_\infty\in L^1(\Om,\R^N)\cap W^{2,q}_{loc}(\Om,\R^N) $ such that $\mathrm{L}g_\infty=0$. We can prove that $g_\infty\not\equiv0$ as in Lemma \ref{opt lemma}, since the same argument holds even if $\mathrm{L} g_p\rightharpoonup0$ in the sense of distributions only. Finally, since $a_p\to e_\infty$ as $p\to\infty$, we can pass to the limit in the expression defining $g_p$ exactly as we did for $f_p$, obtaining \eqref{PDE u} with $f=g_\infty$.
\end{proof}

We conclude this section with the proof of uniqueness of the solution to the minimum problem associated to $E_\infty$. Let $u_1$ and $u_2$ be two minimisers in $\mathcal{W}^{2,\infty}_{u_0}(\Om,\R^N)$, then, by \eqref{PDE u}, we have
$$
F(\cdot,\mathrm{L} u_1)=F(\cdot,\mathrm{L} u_2)=e_\infty,\quad\mbox{a.e. in } \Om.
$$
By convexity of $F(x,\cdot)$ and linearity of L, for a.e. $x\in\Om$ we have
\begin{equation}\label{average}
\begin{aligned}
F\left(x,\mathrm{L}\left(\frac{u_1(x)+u_2(x)}{2}\right)\right)&=F\left(x,\frac{\mathrm{L} u_1(x)}{2}+\frac{\mathrm{L} u_2(x)}{2}\right)\\
&\leq\frac{1}{2}F(x,\mathrm{L} u_1(x))+\frac{1}{2}F(x,\mathrm{L}u_2(x))\\
&=e_\infty.
\end{aligned}
\end{equation}
The average $\frac{u_1+u_2}{2}$ still belongs to $\mathcal{W}^{2,\infty}_{u_0}(\Om,\R^N)$, then necessarily
$$
F\left(\cdot\,,\mathrm{L}\left(\frac{u_1+u_2}{2}\right)\right)=e_\infty,\quad\mbox{a.e. in } \Om.
$$
{}{But since $F(x,\cdot)$ is strongly convex, it is also strictly convex}, so the only possibility to have equality in \eqref{average} is that $\mathrm{L} u_1=\mathrm{L} u_2$, giving $u_1=u_2$ by uniqueness results for elliptic systems with Dirichlet boundary conditions (see \cite[Theorems 3.39 and 3.46]{GM}). 
\\

\section{On the unique continuation property}\label{ucp}
In this last section, we identify classes of operators satisfying the unique continuation property (see Definition \ref{def ucp}).
First, it is worth mentioning that a slightly different notion that often appears in the literature is the \textit{strong unique continuation property}, namely the only solution to $\mathrm{L}u=0$ with a zero of infinite order is the trivial one. We recall that $x_0\in \Om$ is a zero of infinite order for $u$ if $$\int_{{B_r}(x_0)}|u|^2dx=O(r^k)\quad\mbox{ as }r\to0\quad\forall k\in\N.$$ There is also the definition of \textit{weak unique continuation property} (the only solution to $\mathrm{L}u=0$ vanishing on an open set is the trivial one), but we are not interested in that one. For a proof of the fact that the strong definition implies the one in Definition \ref{def ucp}, we refer to \cite{FG}. Their proof actually applies to elliptic equations, but it can be identically readapted for systems, since it is basically a consequence of Caccioppoli inequalities \cite[Sec. 4.2]{GM}.
However, differently from the scalar case, where the zero nodal set is even $(n-1)$-dimensional \cite{HS}, we have no theory available that guarantees sufficient conditions for (strong) unique continuation property for elliptic systems. For this reason, we restrict ourself to list some examples of interest, although one of them is deeply connected to the scalar case. In what follows, we assume that $u\in L^1(\Om,\R^N)$ is a distributional solution of $\mathrm{L}u=\mathrm{div}(A\mathrm{D}u)=0$ with $A$ satisfying either (H1) or (H2). Notice that by Proposition \ref{distrib to weak}, $u$ is actually a weak solution. Without loss of generality, we assume that $u$ is not constant. We consider the following cases:
\begin{itemize}
\item $A$ has analytic coefficients.\\
In this case, it is well known  that solutions to $\mathrm Lu=0$ are analytic \cite{OR}. In particular, if $u$ is not constant, its nodal sets are negligible.

\item $A$ has the form 
$$A=\sum_{h=1}^Ne_h\otimes e_h \otimes B_h$$ 
with $\{e_h\}_{h=1,\ldots,N}$ orthonormal base of $\R^N$ and $B_h\in C^1(\overline\Om,\R^{n\times n})$ elliptic for every $h=1,\ldots,N$.\\
We have
$$
(\mathrm{L}u)_i=\mathrm{D}_\alpha\left(\left[\sum_{h=1}^Ne_{h_i}e_{h_j}B_h^{\alpha\beta}\right]\mathrm{D}_\beta u^j\right)=0\quad\forall i=1,\ldots,N.
$$
Fix $k\in\{1,\ldots,N\}$ and take the scalar product of $Lu$ with $e_k$:
\begin{align*}
0&=e_k^i(\mathrm{L}u)_i\\
&=\mathrm{D}_\alpha\left(\left[\sum_{h=1}^Ne_k^ie_{h_i}e_{h_j}B_h^{\alpha\beta}\right]\mathrm{D}_\beta u^j\right)\\
&=\mathrm{D}_\alpha\left(e_{k_j}B_k^{\alpha\beta}\mathrm{D}_\beta u^j\right)\\
&=\mathrm{D}_\alpha\left(B_k^{\alpha\beta}\mathrm{D}_\beta(e_{k_j} u^j)\right)
\end{align*}
Set $u^{(k)}=e_k\cdot u$, namely the $k$-{th} component of $u$ w.r.t. the base $\{e_h\}$. Then, for each $k=1,\ldots,N$, $u^{(k)}$ solves an elliptic equation in divergence form. Since we assumed that $u$ is not constant, there is at least one $k$ such that $u^{(k)}$ is not constant. So, by applying the scalar theory \cite{HS} to $u^{(k)}$, since
$
\{|u|=0\}\subset\{u^{(k)}=0\}
$, the unique continuation must hold.

\end{itemize} 

\textsc{Acknowledgements:} S.C and R.M. acknowledge partial financial support through the EPSRC grant EP/X017206/1. S.C and N.K. acknowledge partial financial support through the EPSRC grant EP/X017109/1. 
S.C. is a member of Gruppo Nazionale per
l’Analisi Matematica, la Probabilità e le loro Applicazioni (GNAMPA) of the Istituto Nazionale
di Alta Matematica (INdAM) of Italy.\\

\textsc{Data availability}: Our manuscript has no associated data.\\

\textsc{Conflict of interest}: The authors have no Conflict of interest to declare for this article.


\vspace{3mm}
\textsc{Simone Carano}\\
Department of Mathematical Sciences, University of Bath\\
Bath BA2 7AY, UK\\
E-mail: sc3705@bath.ac.uk\\

\textsc{Nikos Katzourakis}\\
Department of Mathematics and Statistics, University of Reading\\
Whiteknights Campus, Pepper Lane, Reading RG6 6AX, UK\\
E-mail: n.katzourakis@reading.ac.uk\\

\textsc{Roger Moser}\\
Department of Mathematical Sciences, University of Bath\\
Bath BA2 7AY, UK\\
E-mail: r.moser@bath.ac.uk

\end{document}